\theoremstyle{plain}
\newtheorem{theorem}                {Theorem}      [section]
\newtheorem{proposition}  [theorem]  {Proposition}
\theoremstyle{definition}
\newtheorem{remark}       [theorem]  {Remark}
\DeclareMathOperator{\trace}{trace}
\DeclareMathOperator{\grad}{grad}
\numberwithin{equation}{section}
\begin{document}

\title{ON THE GEOMETRY OF BIHARMONIC SUBMANIFOLDS IN SASAKIAN SPACE
FORMS}

\thanks{Contribution to the Proceedings of the $10$-th International
Conference on Geometry, Integrability and Quantization, Varna 2008,
Bulgaria}
\author{D.~Fetcu}
\author{C.~Oniciuc}

\address{Department of Mathematics\\
"Gh. Asachi" Technical University of Iasi\\
Bd. Carol I no. 11 \\
700506 Iasi, Romania} \email{dfetcu@math.tuiasi.ro}

\address{Faculty of Mathematics\\ ``Al.I. Cuza'' University of Iasi\\
Bd. Carol I no. 11 \\
700506 Iasi, Romania} \email{oniciucc@uaic.ro}

\begin{abstract}
We classify all proper-biharmonic Legendre curves in a Sasakian
space form and point out some of their geometric properties. Then
we provide a method for constructing anti-invariant
proper-biharmonic submanifolds in Sasakian space forms. Finally,
using the Boothby-Wang fibration, we determine all
proper-biharmonic Hopf cylinders over homogeneous real
hypersurfaces in complex projective spaces.
\end{abstract}

\maketitle

\section{Introduction}\label{sec:1}

As defined by Eells and Sampson in \cite{Eells}, {\em harmonic
maps} $f:(M,g)\rightarrow(N,h)$ are the critical points of the
{\em energy functional}
$$
E(f)=\frac{1}{2}\int_{M}\| df\|^{2} \ v_{g}
$$
and they are solutions of the associated Euler-Lagrange equation
$$
\tau(f)={\mathrm{trace}_{g}}{\nabla df}=0,
$$
where $\tau(f)$ is called the {\em tension field} of $f$. When $f$
is an isometric immersion with mean curvature vector field $H$,
then $\tau(f)=mH$ and $f$ is harmonic if and only if it is
minimal.

The {\em bienergy functional} (proposed also by Eells and Sampson
in 1964, \cite{Eells}) is defined by
$$
E_{2}(f)=\frac{1}{2}\int_{M}\|\tau(f)\|^{2} \ v_{g}.
$$
The critical points of $E_{2}$  are called {\em biharmonic maps}
and they are solutions of the Euler-Lagrange equation (derived by
Jiang in 1986, \cite{Jiang}):
$$
\tau_{2}(f)=-\Delta^{f}\tau(f)-\trace_{g}R^{N}(df,\tau(f))df=0,
$$
where $\Delta^{f}$ is the Laplacian on sections of $f^{-1}TN$ and
$R^{N}(X,Y)=\nabla_X\nabla_Y-\nabla_Y\nabla_X-\nabla_{[X,Y]}$ is
the curvature operator on $N$; $\tau_{2}(f)$ is called the {\em
bitension field} of $f$. Since all harmonic maps are biharmonic,
we are interested in studying those which are biharmonic but
non-harmonic, called {\em proper-biharmonic} maps.

Now, if $f:M\rightarrow N_c$ is an isometric immersion into a
space form of constant sectional curvature $c$, then
$$
\tau(f)=mH \quad \textnormal{and} \quad \tau_2(f)=-m \Delta^{f}H+c
m^2 H.
$$
Thus $f$ is biharmonic if and only if
$$
\Delta^{f} H=mcH.
$$

In a different way, Chen defined the biharmonic submanifolds in an
Euclidean space as those with harmonic mean curvature vector field
(\cite{Chen}). Replacing $c=0$ in the above equation we just
reobtain Chen's definition. Moreover, let
$f:M\rightarrow\mathbb{R}^n$ be an isometric immersion. Set
$f=(f^1,\ldots,f^n)$ and $H=(H^1,\ldots,H^n)$. Then
$\Delta^{f}H=(\Delta H^1,\ldots,\Delta H^n)$, where $\Delta$ is
the Beltrami-Laplace operator on $M$, and $f$ is biharmonic if and
only if
$$
\Delta^{f}H=\Delta(\frac{-\Delta f}{m})=-\frac{1}{m}\Delta^2 f=0.
$$

There are several classification results for the proper-biharmonic
submanifolds in Euclidean spheres and non-existence results for
such submanifolds in space forms $N_c$, $c\leq 0$ (\cite{BMO},
\cite{BMO2}, \cite{RCSMCO}, \cite{RCSMCO2}, \cite{RCSMPP},
\cite{Chen}, \cite{ID}), while in spaces of non-constant sectional
curvature only few results were obtained (\cite{Sasahara2},
\cite{CIL}, \cite{Ura}, \cite{Ino}, \cite{Sasahara1},
\cite{Zhang}).

We recall that the proper-biharmonic curves of the unit Euclidean
$2$-dimensional sphere $\mathbb{S}^2$ are the circles of radius
$\frac{1}{\sqrt{2}}$, and the proper-biharmonic curves of
$\mathbb{S}^3$ are the geodesics of the minimal Clifford torus
$\mathbb{S}^1(\frac{1}{\sqrt{2}})\times\mathbb{S}^1(\frac{1}{\sqrt{2}})$
with the slope different from $\pm 1$. The proper-biharmonic
curves of $\mathbb{S}^3$ are helices. Further, the
proper-biharmonic curves of $\mathbb{S}^n$, $n>3$, are those of
$\mathbb{S}^3$ (up to a totally geodesic embedding). Concerning
the hypersurfaces of $\mathbb{S}^n$, it was conjectured in
~\cite{BMO} that the only proper-biharmonic hypersurfaces are the
open parts of $\mathbb{S}^{n-1}(\frac{1}{\sqrt{2}})$ or
$\mathbb{S}^{m_1}(\frac{1}{\sqrt{2}})\times
\mathbb{S}^{m_2}(\frac{1}{\sqrt{2}})$ with $m_1+m_2=n-1$ and
$m_1\neq m_2$.

Since odd dimensional unit Euclidean spheres $\mathbb{S}^{2n+1}$
are Sasakian space forms with constant $\varphi$-sectional
curvature $1$, the next step is to study the biharmonic
submanifolds of Sasakian space forms. In this paper we mainly
gather the results obtained in \cite{FetcuOniciuc}, \cite{DFCO1}
and \cite{DFCO2}.

We note that the proper-biharmonic submanifolds in
pseudo-Riemannian manifolds are also intensively-studied (for
example, see \cite{Arv}, \cite{Arv2}, \cite{Chen1}).

For a general account of biharmonic maps see
\cite{MontaldoOniciuc} and \textit{The Bibliography of Biharmonic
Maps} \cite{bibl}.

\noindent \textbf{Conventions.} We work in the $C^{\infty}$
category, that means manifolds, metrics, connections and maps are
smooth. The Lie algebra of the vector fields on $N$ is denoted by
$C(TN)$.

\section{Sasakian Space Forms}\label{sec:2}

In this section we briefly recall some basic facts from the theory
of Sasakian manifolds. For more details see \cite{Blair}.

A \textit{contact metric structure} on a manifold $N^{2n+1}$ is
given by $(\varphi,\xi,\eta,g)$, where $\varphi$ is a tensor field
of type $(1,1)$ on $N$, $\xi$ is a vector field on $N$, $\eta$ is
an 1-form on $N$ and $g$ is a Riemannian metric, such that
$$
\left\{\begin{array}{cc} \varphi^{2}=-I+\eta\otimes\xi, \quad
\eta(\xi)=1,\\ \\g(\varphi X,\varphi Y)=g(X,Y)-\eta(X)\eta(Y),
\quad g(X,\varphi Y)=d\eta(X,Y),
\end{array}\right.
$$
for any $X,Y\in C(TN)$.

A contact metric structure $(\varphi,\xi,\eta,g)$ is
\textit{Sasakian} if it is \textit{normal}, i.e.
$$
N_{\varphi}+2d\eta\otimes\xi=0,
$$
where
\newpage
$$
N_{\varphi}(X,Y)=[\varphi X,\varphi Y]-\varphi \lbrack \varphi
X,Y]-\varphi \lbrack X,\varphi Y]+\varphi ^{2}[X,Y], \quad \forall
X,Y\in C(TN)
$$
is the Nijenhuis tensor field of $\varphi$.

The \textit{contact distribution} of a Sasakian manifold
$(N,\varphi,\xi,\eta,g)$ is defined by $\{X\in TN : \eta(X)=0\}$,
and an integral curve of the contact distribution is called
\textit{Legendre curve}.

A submanifold $M$ of $N$ which is tangent to $\xi$ is said to be
\textit{anti-invariant} if $\varphi$ maps any vector tangent to
$M$ and normal to $\xi$ to a vector normal to $M$.

Let $(N,\varphi,\xi,\eta,g)$ be a Sasakian manifold. The sectional
curvature of a 2-plane generated by $X$ and $\varphi X$, where $X$
is an unit vector orthogonal to $\xi$, is called
$\varphi$-\textit{sectional curvature} determined by $X$. A
Sasakian manifold with constant $\varphi$-sectional curvature $c$
is called a \textit{Sasakian space form} and it is denoted by
$N(c)$.

A contact metric manifold $(N,\varphi,\xi,\eta,g)$ is called
\textit{regular} if for any point $p\in N$ there exists a cubic
neighborhood of $p$ such that any integral curve of $\xi$ passes
through the neighborhood at most once, and \textit{strictly
regular} if all integral curves are homeomorphic to each other.

\noindent Let $(N,\varphi,\xi,\eta,g)$ be a regular contact metric
manifold. Then the orbit space $\bar{N}=N/\xi$ has a natural
manifold structure and, moreover, if $N$ is compact then $N$ is a
principal circle bundle over $\bar{N}$ (the Boothby-Wang Theorem).
In this case the fibration $\pi:N\rightarrow\bar{N}$ is called
\textit{the Boothby-Wang fibration}. The Hopf fibration
$\pi:\mathbb{S}^{2n+1}\rightarrow \mathbb{C}P^{n}$ is a well-known
example of a Boothby-Wang fibration.

\begin{theorem}[\cite{Ogiue}] Let $(N,\varphi,\xi,\eta,g)$ be a strictly
regular Sasakian manifold. Then on $\bar{N}$ can be given the
structure of a K\"{a}hler manifold. Moreover, if $(N,\varphi,\xi,$
$\eta,g)$ is a Sasakian space form $N(c)$, then $\bar{N}$ has
constant sectional holomorphic curvature $c+3$.
\end{theorem}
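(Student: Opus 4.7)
The plan is to realize $\bar{N}$ as the base of a Riemannian submersion $\pi:N\to\bar{N}$ with one-dimensional totally geodesic fibers generated by $\xi$, and push the Sasakian structure down to a K\"ahler structure. Since $N$ is strictly regular, the orbit space is a smooth manifold and $\pi$ is a submersion. At each $p\in N$ the contact distribution $\mathcal{H}_p=\ker\eta_p$ is mapped isomorphically by $d\pi_p$ onto $T_{\pi(p)}\bar{N}$, so I can define $\bar{g}$ on $\bar{N}$ by $\bar{g}(d\pi X,d\pi Y)=g(X,Y)$ for horizontal $X,Y$, and an almost complex structure by $\bar{J}(d\pi X)=d\pi(\varphi X)$. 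The well-definedness of these objects is the first nontrivial issue: one must check they are invariant along fibers, which amounts to $\mathcal{L}_\xi g=0$ (i.e. $\xi$ is Killing) and $\mathcal{L}_\xi\varphi=0$, both standard consequences of the Sasakian identities $\nabla_X\xi=-\varphi X$ and $(\nabla_X\varphi)Y=g(X,Y)\xi-\eta(Y)X$.

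Next I would verify the K\"ahler axioms. The identity $\bar{J}^2=-I$ follows directly from $\varphi^2=-I+\eta\otimes\xi$ evaluated on horizontal vectors, and compatibility $\bar{g}(\bar{J}\bar{X},\bar{J}\bar{Y})=\bar{g}(\bar{X},\bar{Y})$ follows from the contact-metric identity since $\eta$ vanishes on horizontal vectors. For the fundamental form $\bar{\omega}(\bar{X},\bar{Y})=\bar{g}(\bar{X},\bar{J}\bar{Y})$, one has $\pi^{\ast}\bar{\omega}=d\eta$ on horizontal vectors, and since $d\eta$ is closed on $N$ and $\xi$-invariant, $\bar{\omega}$ is closed on $\bar{N}$. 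Integrability of $\bar{J}$ is exactly where the \emph{normality} condition $N_{\varphi}+2d\eta\otimes\xi=0$ is used: the Nijenhuis tensor of $\bar{J}$ at $\pi(p)$ is $d\pi$ applied to the horizontal part of $N_{\varphi}$, which is zero modulo $\xi$, hence vanishes downstairs.

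For the curvature statement I would invoke O'Neill's formulas for a Riemannian submersion with totally geodesic fibers. Given a unit horizontal $X$, both $X$ and $\varphi X$ are horizontal lifts of the $\bar{J}$-pair $(\bar{X},\bar{J}\bar{X})$, and O'Neill gives
$$
\bar{K}(\bar{X},\bar{J}\bar{X})=K(X,\varphi X)+3\,|A_X\varphi X|^{2},
$$
where $A_X\varphi X=\tfrac12[X,\varphi X]^{V}$. A short computation using $\nabla_X\xi=-\varphi X$ shows $g([X,\varphi X],\xi)=2$, so $A_X\varphi X=\xi$ and $|A_X\varphi X|^{2}=1$. Since the $\varphi$-sectional curvature of $N(c)$ is $c$ by definition, this yields $\bar{K}(\bar{X},\bar{J}\bar{X})=c+3$ for every unit $\bar{X}$, which is the constant holomorphic sectional curvature assertion.

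The main obstacle, I expect, is the bookkeeping in the well-definedness step: one must confirm carefully that $\bar{g}$, $\bar{J}$ and $\bar{\omega}$ are genuinely $\xi$-invariant tensors, which is where every piece of the Sasakian package (Killing $\xi$, $\mathcal{L}_\xi\varphi=0$, normality) enters. Once that is in hand, the K\"ahler axioms are algebraic translations of the contact-metric identities, and the curvature claim reduces to a single O'Neill-type computation using the Sasakian covariant derivative of $\xi$.
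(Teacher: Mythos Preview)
The paper does not prove this theorem; it is quoted from \cite{Ogiue} as a background result and no argument is supplied. Your outline is the standard proof and is essentially correct: the Sasakian identities $\nabla_X\xi=-\varphi X$ and $(\nabla_X\varphi)Y=g(X,Y)\xi-\eta(Y)X$ give $\mathcal{L}_\xi g=0$ and $\mathcal{L}_\xi\varphi=0$, so $\bar g$ and $\bar J$ descend; normality forces $N_{\bar J}=0$; closedness of $d\eta$ gives $d\bar\omega=0$; and the O'Neill computation with $A_X\varphi X=\xi$ yields $\bar K(\bar X,\bar J\bar X)=c+3$. One small point worth tightening is the claim that the fibers are totally geodesic: this follows from $\nabla_\xi\xi=-\varphi\xi=0$, which you use implicitly but do not state.
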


Even if $N$ is non-compact, we still call the fibration
$\pi:N\to\bar{N}$ of a strictly regular Sasakian manifold, the
Boothby-Wang fibration.

\section{Biharmonic Legendre Curves in Sasakian Space Forms}

Let $(N^{n},g)$ be a Riemannian manifold and $\gamma:I\to N$ a
curve parametrized by arc length. Then $\gamma$ is called a
\textit{Frenet curve of osculating order r}, $1\leq r\leq n$, if
there exists orthonormal vector fields $E_{1},E_{2},\ldots,E_{r}$
along $\gamma$ such that $E_{1}=\gamma'=T$,
$\nabla_{T}E_{1}=\kappa_{1}E_{2}$,
$\nabla_{T}E_{2}=-\kappa_{1}E_{1}+\kappa_{2}E_{3}$,...,$\nabla_{T}E_{r}=-\kappa_{r-1}E_{r-1}$,
where $\kappa_{1},\ldots,\kappa_{r-1}$ are positive functions on
$I$.

A geodesic is a Frenet curve of osculating order 1; a
\textit{circle} is a Frenet curve of osculating order 2 with
$\kappa_{1}=\textnormal{constant}$; a \textit{helix of order r},
$r\geq 3$, is a Frenet curve of osculating order $r$ with
$\kappa_{1},\ldots,\kappa_{r-1}$ constants; a helix of order $3$
is called, simply, helix.

In \cite{DFCO1} we studied the biharmonicity of Legendre Frenet
curves and we obtained the following results.

Let $(N^{2n+1},\varphi,\xi,\eta,g)$ be a Sasakian space form with
constant $\varphi$-sectional curvature $c$ and
$\gamma:I\rightarrow N$ a Legendre Frenet curve of osculating
order $r$. Then $\gamma$ is biharmonic if and only if
$$
\begin{array}{rl}
\tau_{2}(\gamma)=&\nabla_{T}^{3}T-R(T,\nabla_{T}T)T\\ \\
=&(-3\kappa_{1}\kappa_{1}')E_{1}+\Big(\kappa_{1}''-\kappa_{1}^{3}-
\kappa_{1}\kappa_{2}^{2}+\frac{(c+3)\kappa_{1}}{4}\Big)E_{2}\\
\\&+(2\kappa_{1}'\kappa_{2}+\kappa_{1}\kappa_{2}')E_{3}+
\kappa_{1}\kappa_{2}\kappa_{3}E_{4}+
\frac{3(c-1)\kappa_{1}}{4}g(E_{2},\varphi T)\varphi T\\ \\=&0.
\end{array}
$$

The expression of the bitension field $\tau_{2}(\gamma)$ imposed a
case-by-case analysis as follows.

\noindent\textbf{Case I ($c=1$)}
\begin{theorem}[\cite{DFCO1}]\label{teocase1}If $c=1$ then $\gamma$ is
proper-biharmonic if and only if $n\geq 2$ and either $\gamma$ is
a circle with $\kappa_{1}=1$ or $\gamma$ is a helix with
$\kappa_{1}^{2}+\kappa_{2}^{2}=1$.
\end{theorem}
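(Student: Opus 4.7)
The plan is to exploit the simplification that occurs when $c=1$: the final summand $\frac{3(c-1)\kappa_1}{4}g(E_2,\varphi T)\varphi T$ in the expression for $\tau_2(\gamma)$ vanishes identically. Then, since $\{E_1,E_2,E_3,E_4\}$ is orthonormal (with $E_3,E_4$ only present when the osculating order is large enough), the vector equation $\tau_2(\gamma)=0$ decouples componentwise into
\begin{gather*}
\kappa_1\kappa_1'=0,\qquad 2\kappa_1'\kappa_2+\kappa_1\kappa_2'=0,\qquad \kappa_1\kappa_2\kappa_3=0,\\
\kappa_1''-\kappa_1^3-\kappa_1\kappa_2^2+\kappa_1=0.
\end{gather*}

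Under proper-biharmonicity we have $\kappa_1>0$, so the first equation forces $\kappa_1$ to be constant, and the second then forces $\kappa_2$ to be constant. The third equation leads to a dichotomy. If $\kappa_2=0$, then $\gamma$ is a circle and the last equation collapses to $\kappa_1=1$. If $\kappa_2>0$, then necessarily $\kappa_3=0$, so $\gamma$ is a helix of order three and the last equation becomes $\kappa_1^2+\kappa_2^2=1$. This yields the two admissible curvature profiles.

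The main obstacle is establishing that $n\geq 2$ is necessary. In a $3$-dimensional Sasakian manifold the contact distribution is two-dimensional, so for any Legendre Frenet curve with $\kappa_1>0$ the vector $E_2$, being orthogonal to both $T$ and $\xi$, must be $\pm\varphi T$. Using the Sasakian identities $\nabla_X\xi=-\varphi X$ and $(\nabla_X\varphi)Y=g(X,Y)\xi-\eta(Y)X$, a short computation gives
\[
\nabla_T E_2=(\nabla_T\varphi)T+\varphi(\nabla_T T)=\xi-\kappa_1 T,
\]
which, combined with the Frenet relation $\nabla_T E_2=-\kappa_1 E_1+\kappa_2 E_3$, forces $\kappa_2=1$ and $E_3=\xi$. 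Neither admissible profile survives this constraint: the circle case requires $\kappa_2=0$, while the helix case forces $\kappa_1=0$, contradicting properness.

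Conversely, when $n\geq 2$ there is enough room in the contact distribution for $E_2$ and $E_3$ to be independent of $\varphi T$ and $\xi$, and plugging either profile back into the bitension field formula shows that the resulting curve is proper-biharmonic. The existence of such Legendre Frenet curves in a Sasakian space form for each admissible pair $(\kappa_1,\kappa_2)$ follows from standard ODE arguments applied to the Frenet system and will be illustrated by explicit examples later in the paper.
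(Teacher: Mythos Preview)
Your proof is correct and follows precisely the approach implicit in the paper: this theorem is stated here without proof (it is quoted from \cite{DFCO1}), but the paper records the full expression of $\tau_2(\gamma)$ just before it, and your argument is the direct componentwise analysis of that expression in the case $c=1$, together with the standard observation that in dimension~$3$ a non-geodesic Legendre curve necessarily has $E_2=\pm\varphi T$ and $\kappa_2=1$. Nothing further is needed.
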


\noindent\textbf{Case II ($c\neq 1$ and $E_{2}\perp\varphi T$)}

\begin{theorem}[\cite{DFCO1}]\label{teocase2}Assume that $c\neq 1$ and
$E_{2}\perp\varphi T$. We have
\begin{enumerate}
\item[1)] if $c\leq -3$ then $\gamma$ is biharmonic if and only if
it is a geodesic; \item[2)] if $c>-3$ then $\gamma$ is
proper-biharmonic if and only if either
\begin{enumerate}
\item[a)] $n\geq 2$ and $\gamma$ is a circle with
$\kappa_{1}^{2}=\frac{c+3}{4}$, or \item[b)] $n\geq 3$ and
$\gamma$ is a helix with
$\kappa_{1}^{2}+\kappa_{2}^{2}=\frac{c+3}{4}$.
\end{enumerate}
\end{enumerate}
\end{theorem}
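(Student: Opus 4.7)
The approach is to set $\tau_2(\gamma)=0$ and decompose along the orthonormal Frenet frame. Under the hypothesis $E_2\perp\varphi T$, the final term $\frac{3(c-1)\kappa_1}{4}g(E_2,\varphi T)\varphi T$ in the displayed expression for $\tau_2(\gamma)$ vanishes, leaving a linear combination of $E_1,E_2,E_3,E_4$. Equating each coefficient to zero yields the system
\begin{equation*}
\kappa_1\kappa_1'=0,\qquad \kappa_1''-\kappa_1^{3}-\kappa_1\kappa_2^{2}+\tfrac{c+3}{4}\kappa_1=0,\qquad 2\kappa_1'\kappa_2+\kappa_1\kappa_2'=0,\qquad \kappa_1\kappa_2\kappa_3=0.
\end{equation*}
For proper-biharmonic $\gamma$ we have $\kappa_1>0$, so the first equation gives $\kappa_1$ constant; the third then gives $\kappa_2$ constant; and the fourth reduces to $\kappa_2\kappa_3=0$. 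Either $\kappa_2=0$, in which case $\gamma$ is a circle with $\kappa_1^{2}=\tfrac{c+3}{4}$, or $\kappa_2>0$ and $\kappa_3=0$, in which case $\gamma$ is a helix of order $3$ with $\kappa_1^{2}+\kappa_2^{2}=\tfrac{c+3}{4}$. In both cases positivity of $\kappa_1$ forces $c>-3$, giving simultaneously the forward direction of part~2) and, by contraposition, part~1): when $c\leq -3$ only geodesics survive.

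The main obstacle will be establishing sharpness of the dimension hypotheses. Using the Sasakian identities $\nabla_X\xi=-\varphi X$ and $(\nabla_X\varphi)Y=g(X,Y)\xi-\eta(Y)X$ together with $\eta(T)=0$, a direct computation shows that $E_2$ lies in the contact distribution; combined with $E_2\perp T,\varphi T$ this forces the contact distribution to have dimension at least $3$, i.e.\ $n\geq 2$. For the helix case, differentiating $g(E_2,\varphi T)=0$ and $g(E_2,\xi)=0$ along $\gamma$ shows that $E_3$ likewise lies in the contact distribution and satisfies $E_3\perp\varphi T$. In the smallest candidate dimension $n=2$ the contact distribution is spanned orthonormally by $T,\varphi T,E_2,\varphi E_2$, so necessarily $E_3=\pm\varphi E_2$. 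Computing
\begin{equation*}
\nabla_T E_3=\pm\nabla_T(\varphi E_2)=\pm\varphi(\nabla_T E_2)=\mp\kappa_1\varphi T-\kappa_2 E_2
\end{equation*}
via the identities above yields a nonzero $\varphi T$-component proportional to $\kappa_1$, which contradicts the Frenet relation $\nabla_T E_3=-\kappa_2 E_2$ valid for osculating order $3$. Hence $n=2$ is ruled out and $n\geq 3$.

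For the converse, the ODE system is automatically satisfied by a Legendre circle with $\kappa_1^{2}=\tfrac{c+3}{4}$, or by a Legendre helix of order $3$ with $\kappa_1^{2}+\kappa_2^{2}=\tfrac{c+3}{4}$ and $E_2\perp\varphi T$; the existence of such curves in $N(c)$ under the stated dimension hypotheses should be verified by exhibiting explicit examples in standard models of Sasakian space forms, which is routine.
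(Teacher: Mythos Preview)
The paper does not actually supply a proof of this theorem; it is quoted from \cite{DFCO1}, and only the bitension-field formula together with the case split is reproduced here. Your argument is precisely the one that this setup invites: kill the $\varphi T$-term under the hypothesis $g(E_2,\varphi T)=0$, read off the four scalar equations from the Frenet frame, and solve. Your treatment of the resulting ODE system is correct and matches what the paper's later Propositions~\ref{p1} and 3.9 implicitly rely on.

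The part that goes beyond what the paper records is your justification of the dimension bounds, and it is sound. The check that $\eta(E_2)=0$ (hence $T,\varphi T,E_2$ are orthonormal in the contact distribution, forcing $n\geq 2$) is exactly the computation the paper carries out in the proof of Proposition~\ref{p1}. Your argument excluding $n=2$ in the helix case is also correct: the identities $\eta(E_3)=0$ and $g(E_3,\varphi T)=0$ follow by differentiating $\eta(E_2)=0$ and $g(E_2,\varphi T)=0$, so in a $5$-dimensional ambient space one is forced to $E_3=\pm\varphi E_2$; then $(\nabla_T\varphi)E_2=g(T,E_2)\xi-\eta(E_2)T=0$ gives $\nabla_TE_3=\mp\kappa_1\varphi T-\kappa_2E_2$, contradicting $\nabla_TE_3=-\kappa_2E_2$. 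One minor remark: the existence of Legendre circles/helices satisfying $E_2\perp\varphi T$ in the stated dimensions is not part of the ``if and only if'' as phrased, so your closing sentence about exhibiting examples is a welcome addendum rather than a gap.
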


\noindent\textbf{Case III ($c\neq 1$ and $E_{2}\parallel\varphi
T$)}

\begin{theorem}[\cite{DFCO1}]\label{teocase3}If $c\neq 1$ and
$E_{2}\parallel\varphi T$, then $\{T,\varphi T,\xi\}$ is the
Frenet frame field of $\gamma$ and we have
\begin{enumerate}
\item[1)] if $c<1$ then $\gamma$ is biharmonic if and only if it
is a geodesic; \item[2)] if $c>1$ then $\gamma$ is
proper-biharmonic if and only if it is a helix with
$\kappa_{1}^{2}=c-1$ (and $\kappa_{2}=1$).
\end{enumerate}
\end{theorem}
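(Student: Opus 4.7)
The starting point is the hypothesis $E_{2}\parallel\varphi T$. Since both vectors are unit, we may assume $E_{2}=\varphi T$ (the other sign is analogous). My first move is to identify the Frenet frame explicitly using the Sasakian structure equations. Recall that on a Sasakian manifold $(\nabla_{X}\varphi)Y=g(X,Y)\xi-\eta(Y)X$ and $\nabla_{X}\xi=-\varphi X$. Applied along the Legendre curve (so $\eta(T)=0$), this gives
$$
\nabla_{T}E_{2}=\nabla_{T}(\varphi T)=(\nabla_{T}\varphi)T+\varphi\nabla_{T}T=\xi-\kappa_{1}T,
$$
using $\varphi^{2}T=-T$. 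Comparing with the Frenet formula $\nabla_{T}E_{2}=-\kappa_{1}E_{1}+\kappa_{2}E_{3}$ and taking norms (noting $\xi\perp T$), I read off $\kappa_{2}=1$ and $E_{3}=\xi$. Then $\nabla_{T}E_{3}=\nabla_{T}\xi=-\varphi T=-E_{2}$, which matches $\nabla_{T}E_{3}=-\kappa_{2}E_{2}$ without producing a new direction; hence the osculating order is at most $3$ (so $\kappa_{3}=0$) and $\{T,\varphi T,\xi\}$ is the Frenet frame, as stated.

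Next I plug these data into the displayed formula for $\tau_{2}(\gamma)$. With $E_{2}=\varphi T$ we have $g(E_{2},\varphi T)\varphi T=E_{2}$, and $\kappa_{2}\equiv 1$, $\kappa_{3}=0$. The $E_{4}$ and $E_{3}$ components then vanish identically (the latter because $\kappa_{2}'=0$, forcing $\kappa_{1}'=0$ in the $E_{3}$ slot only modulo $\kappa_{1}$; I will be careful here). The $E_{1}$ component gives $-3\kappa_{1}\kappa_{1}'=0$, whence $\kappa_{1}$ is constant, and then the combined $E_{2}=\varphi T$ coefficient becomes
$$
\kappa_{1}''-\kappa_{1}^{3}-\kappa_{1}\kappa_{2}^{2}+\tfrac{c+3}{4}\kappa_{1}+\tfrac{3(c-1)}{4}\kappa_{1}=-\kappa_{1}^{3}+(c-1)\kappa_{1}=0,
$$
after routine algebra. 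Thus $\gamma$ is biharmonic if and only if $\kappa_{1}=\mathrm{const}$ and $\kappa_{1}(\kappa_{1}^{2}-(c-1))=0$.

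From this algebraic condition the two cases follow immediately. If $c<1$ then $\kappa_{1}^{2}=c-1$ has no real solution, forcing $\kappa_{1}=0$, i.e., $\gamma$ is a geodesic; this proves (1). If $c>1$ then the proper-biharmonic option is $\kappa_{1}=\sqrt{c-1}>0$, together with $\kappa_{2}=1$, so $\gamma$ is a helix with the stated curvatures, proving (2).

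The only really delicate point, and the one I would write out most carefully, is the identification of the Frenet frame and the verification that the osculating order is exactly $3$ with $E_{3}=\xi$; once this geometric input from the Sasakian identities is in hand, everything else reduces to substitution into the bitension field and a short algebraic reduction. A secondary care is needed to handle the trivial case $\kappa_{1}=0$ separately (where the $E_{2}\parallel\varphi T$ hypothesis is vacuous), but this is subsumed under the geodesic conclusion in both subcases.
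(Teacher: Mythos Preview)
Your argument is correct and is exactly the computation the paper's setup points to: the paper does not reproduce a proof of this theorem (it is quoted from \cite{DFCO1}), but it displays the bitension field $\tau_{2}(\gamma)$ and announces a case-by-case analysis, and your proof is precisely the Case~III branch of that analysis. The identification $E_{3}=\xi$, $\kappa_{2}=1$ via $(\nabla_{T}\varphi)T=\xi$ and $\nabla_{T}\xi=-\varphi T$ is the intended geometric input, and the remaining algebra $-\kappa_{1}^{3}+(c-1)\kappa_{1}=0$ is right.

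One cosmetic remark: your sentence about the $E_{3}$ component ``vanishing identically'' is slightly misphrased; with $\kappa_{2}\equiv 1$ that component is $2\kappa_{1}'$, which does not vanish identically but rather gives $\kappa_{1}'=0$ (redundantly with the $E_{1}$ component). You clearly have this straight in the next line, so it is only a matter of wording.
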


\begin{remark} In dimension 3 the result was obtained by Inoguchi
in \cite{Ino} and explicit examples are given in
\cite{FetcuOniciuc}.
\end{remark}

\noindent\textbf{Case IV ($c\neq 1$ and $g(E_{2},\varphi T)$ is
not constant $0$, $1$ or $-1$)}

\begin{theorem}[\cite{DFCO1}]\label{teocase4}Let $c\neq 1$
and $\gamma$ a Legendre Frenet curve of osculating order $r$ such
that $g(E_{2},\varphi T)$ is not constant $0$, $1$ or $-1$. We
have
\begin{enumerate}
\item[1)] if $c\leq -3$ then $\gamma$ is biharmonic if and only if
it is a geodesic; \item[2)] if $c>-3$ then $\gamma$ is
proper-biharmonic if and only if $r\geq 4$, $\varphi
T=\cos\alpha_{0}E_{2}+\sin\alpha_{0}E_{4}$ and
$$
\left\{\begin{array}{ll}
\kappa_{1},\kappa_{2},\kappa_{3}=\textnormal{constant}>0
\\ \\
\kappa_{1}^{2}+\kappa_{2}^{2}=\frac{c+3}{4}+\frac{3(c-1)}{4}\cos^{2}\alpha_{0}
\\ \\
\kappa_{2}\kappa_{3}=-\frac{3(c-1)}{8}\sin(2\alpha_{0}),
\end{array}\right.
$$
where $\alpha_{0}\in
(0,2\pi)\setminus\{\frac{\pi}{2},\pi,\frac{3\pi}{2}\}$ is a
constant such that
$$
c+3+3(c-1)\cos^{2}\alpha_{0}>0, \quad 3(c-1)\sin(2\alpha_{0})<0.
$$
\end{enumerate}
\end{theorem}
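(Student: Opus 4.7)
The plan is to expand the vanishing bitension field expression along the Frenet frame and its orthogonal complement in $TN$. Since $\gamma$ is Legendre one has $g(\varphi T,T)=d\eta(T,T)=0$, and from the Sasakian identities $\varphi\xi=0$ and $g(\varphi X,Y)=-g(X,\varphi Y)$ one obtains $g(\varphi T,\xi)=0$, together with $|\varphi T|^{2}=|T|^{2}-\eta(T)^{2}=1$. So $\varphi T$ is a unit vector orthogonal to both $E_{1}$ and $\xi$, and I would write
$$
\varphi T=\sum_{i=2}^{r}a_{i}E_{i}+V,\qquad a_{i}:=g(\varphi T,E_{i}),
$$
with $V$ lying in the orthogonal complement of the osculating space. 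Substituting into the $\varphi T$-term of $\tau_{2}(\gamma)$ splits the equation $\tau_{2}(\gamma)=0$ into one scalar identity per frame direction.

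\textbf{First reductions.} The $E_{1}$-component reads $-3\kappa_{1}\kappa_{1}'=0$, so either $\gamma$ is a geodesic or $\kappa_{1}$ is a positive constant; I assume the latter. Because $c\neq 1$, $\kappa_{1}>0$, and the running hypothesis gives $a_{2}=g(E_{2},\varphi T)\neq 0$, the projections on $E_{i}$ for $i\geq 5$ and on $V$ immediately force $a_{i}=0$ for $i\geq 5$ and $V=0$. Hence $\varphi T\in\mathrm{span}\{E_{2},E_{3},E_{4}\}$, which already requires osculating order $r\geq 4$, and one may write $\varphi T=\cos\alpha\,E_{2}+p\,E_{3}+q\,E_{4}$ with $p^{2}+q^{2}=\sin^{2}\alpha$ for some functions $\alpha,p,q$ along $\gamma$.

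\textbf{Constancy and the key vanishing $p\equiv 0$.} The crux is to show $p\equiv 0$ and that $\alpha,q,\kappa_{2},\kappa_{3}$ are constant. I would differentiate the scalars $a_{i}$ and $\eta_{i}:=\eta(E_{i})=g(\xi,E_{i})$ along $\gamma$ using the Sasakian identities $(\nabla_{X}\varphi)Y=g(X,Y)\xi-\eta(Y)X$ and $\nabla_{X}\xi=-\varphi X$. These yield $\nabla_{T}(\varphi T)=\xi+\kappa_{1}\varphi E_{2}$ and a recursion linking the $a_{i}$'s to the $\eta_{i}$'s, $\kappa_{2}$ and $\kappa_{3}$; in particular $\eta_{1}=\eta_{2}=0$ and $\kappa_{2}\eta_{3}=a_{2}$. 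Feeding this into the $E_{3}$-projection of the bitension equation, which with $\kappa_{1}'=0$ reads $\kappa_{1}\kappa_{2}'+\tfrac{3(c-1)}{4}\kappa_{1}a_{2}a_{3}=0$, and iterating, one is led to $a_{3}=0$. Consequently $\kappa_{2}$ is constant; then the $E_{4}$-projection $\kappa_{1}\kappa_{2}\kappa_{3}+\tfrac{3(c-1)}{4}\kappa_{1}a_{2}a_{4}=0$ forces $\kappa_{3}$ and $\alpha=\alpha_{0}$ to be constant as well, giving the asserted form $\varphi T=\cos\alpha_{0}\,E_{2}+\sin\alpha_{0}\,E_{4}$.

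\textbf{Conclusion and case split.} After these reductions the $E_{2}$- and $E_{4}$-projections of $\tau_{2}(\gamma)=0$ collapse to the two algebraic identities
$$
\kappa_{1}^{2}+\kappa_{2}^{2}=\tfrac{c+3}{4}+\tfrac{3(c-1)}{4}\cos^{2}\alpha_{0},\qquad \kappa_{2}\kappa_{3}=-\tfrac{3(c-1)}{8}\sin(2\alpha_{0}),
$$
which are exactly the relations in the statement. For $c\leq-3$ the right-hand side of the first one is non-positive while the left-hand side is strictly positive, a contradiction, so $\gamma$ must be a geodesic, proving (1). For $c>-3$, imposing $\kappa_{1}^{2}+\kappa_{2}^{2}>0$ and $\kappa_{2}\kappa_{3}>0$ translates into the stated inequalities on $\alpha_{0}$, and the excluded values $\{\pi/2,\pi,3\pi/2\}$ are precisely those making $g(E_{2},\varphi T)\in\{0,\pm 1\}$ and so violating the standing assumption. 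The converse implication is then a direct verification from the formula for $\tau_{2}(\gamma)$. I expect the main obstacle to be the Sasakian-flavoured ODE analysis establishing $a_{3}=0$: it requires careful bookkeeping of how $\xi$ projects on the Frenet frame via the evolution of the $\eta_{i}$ governed by $\nabla_{T}\xi=-\varphi T$, so as to extract the vanishing of $a_{3}$ without circularity.
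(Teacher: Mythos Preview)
The paper itself does not prove this theorem; it is quoted from \cite{DFCO1}, and the only ingredient supplied here is the explicit expression for $\tau_{2}(\gamma)$ preceding the case split. So there is no in-paper argument to compare your proposal against.

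That said, your plan is the natural one and is correct. The only step you leave open---showing $a_{3}=g(\varphi T,E_{3})=0$---goes through without the heavy ODE bookkeeping you anticipate. With $\kappa_{1}$ constant, the $E_{2}$-component of $\tau_{2}(\gamma)=0$ reads
\[
\kappa_{1}^{2}+\kappa_{2}^{2}=\tfrac{c+3}{4}+\tfrac{3(c-1)}{4}\,a_{2}^{2};
\]
differentiating along $\gamma$ gives $\kappa_{2}\kappa_{2}'=\tfrac{3(c-1)}{4}\,a_{2}a_{2}'$. From $\nabla_{T}(\varphi T)=\xi+\kappa_{1}\varphi E_{2}$ and the Frenet equations one finds
\[
a_{2}'=g(\nabla_{T}(\varphi T),E_{2})+g(\varphi T,\nabla_{T}E_{2})=\eta(E_{2})+\kappa_{2}a_{3}=\kappa_{2}a_{3},
\]
since $\eta(E_{2})=0$ (as in the proof of Proposition~\ref{p1}). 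Hence $\kappa_{2}'=\tfrac{3(c-1)}{4}\,a_{2}a_{3}$, while your $E_{3}$-component gives $\kappa_{2}'=-\tfrac{3(c-1)}{4}\,a_{2}a_{3}$; adding forces $a_{3}=0$ because $c\neq 1$ and $a_{2}\neq 0$. The same trick run with $r\in\{2,3\}$ yields $\varphi T\in\mathrm{span}\{E_{2}\}$ (resp.\ $\mathrm{span}\{E_{2},E_{3}\}$) and then $a_{2}=\pm 1$, contradicting the hypothesis and confirming $r\geq 4$. After $a_{3}=0$ your outline runs as written: $a_{2}'=0$ makes $\alpha_{0}$ constant, the $E_{3}$-component gives $\kappa_{2}$ constant, and the $E_{4}$-component then fixes $\kappa_{3}$ and produces the second algebraic relation.
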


In order to obtain explicit examples of proper-biharmonic Legendre
curves given by Theorem \ref{teocase1} we used the unit Euclidean
sphere $\mathbb{S}^{2n+1}$ as a model of a Sasakian space form
with $c=1$ and we proved the following

\begin{theorem}[\cite{DFCO1}]\label{curv2s2n+1,1}
Let $\gamma:I\to\mathbb{S}^{2n+1}(1)$, $n\geq 2$, be a
proper-biharmonic Legendre curve parametrized by arc length. Then
the parametric equation of $\gamma$ in the Euclidean space
$\mathbb{E}^{2n+2}=(\mathbb{R}^{2n+2},\langle,\rangle)$ is either
$$
\gamma(s)=\frac{1}{\sqrt{2}}\cos\Big(\sqrt{2}s\Big)e_{1}+
\frac{1}{\sqrt{2}}\sin\Big(\sqrt{2}s\Big)e_{2}+\frac{1}{\sqrt{2}}e_{3},
$$
where $\{e_{i},\mathcal{I}e_{j}\}$ are constant unit vectors
orthogonal to each other, or
$$
\gamma(s)=\frac{1}{\sqrt{2}}\cos(As)e_{1}+\frac{1}{\sqrt{2}}\sin(As)e_{2}
+\frac{1}{\sqrt{2}}\cos(Bs)e_{3}+\frac{1}{\sqrt{2}}\sin(Bs)e_{4},
$$
where
$$
A=\sqrt{1+\kappa_{1}}, \quad B=\sqrt{1-\kappa_{1}}, \quad
\kappa_{1}\in(0,1),
$$
$\{e_{i}\}$ are constant unit vectors orthogonal to each other such
that
$$
\langle e_{1},\mathcal{I}e_{3}\rangle=\langle
e_{1},\mathcal{I}e_{4}\rangle=\langle e_{2},
\mathcal{I}e_{3}\rangle=\langle e_{2},\mathcal{I}e_{4}\rangle=0\
$$
$$
A\langle e_{1},\mathcal{I}e_{2}\rangle+B\langle
e_{3},\mathcal{I}e_{4}\rangle=0
$$
and $\mathcal{I}$ is the usual complex structure on
$\mathbb{R}^{2n+2}$.
\end{theorem}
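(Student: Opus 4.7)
The plan is to invoke Theorem~\ref{teocase1}, which forces $\gamma$ to be either a circle with $\kappa_1=1$ or a helix of order $3$ with $\kappa_1^2+\kappa_2^2=1$, and in each case to rewrite the intrinsic Frenet-Serret system as a linear constant-coefficient ODE for $\gamma$ viewed as a curve in the ambient Euclidean space $\mathbb{E}^{2n+2}$. I will use the standard Sasakian structure on $\mathbb{S}^{2n+1}\subset\mathbb{C}^{n+1}$, in which the Reeb field is $\xi_p=-\mathcal{I}p$, so that the Legendre condition reads $\langle\gamma',\mathcal{I}\gamma\rangle\equiv 0$, together with the Gauss formula $V'=\nabla_T V-\langle T,V\rangle\gamma$ for the unit sphere, giving in particular $\gamma''=\kappa_1 E_2-\gamma$.

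In Case~I ($\kappa_1=1$), since $E_2'=-T$ one differentiation produces $\gamma'''+2\gamma'=0$, whose bounded solutions are $\gamma(s)=\tfrac12 C+\alpha\cos(\sqrt{2}s)+\beta\sin(\sqrt{2}s)$ for constant vectors $C,\alpha,\beta$. Separating coefficients in $|\gamma|^2\equiv1$ and $|\gamma'|^2\equiv1$ forces $\{\tfrac12 C,\alpha,\beta\}$ to be orthogonal of common length $\tfrac{1}{\sqrt 2}$, producing the first stated parametrization. The Legendre identity $\langle\gamma',\mathcal{I}\gamma\rangle\equiv 0$ then decomposes into coefficients of $1,\sin(\sqrt{2}s),\cos(\sqrt{2}s)$, which (combined with antisymmetry of $\mathcal{I}$) say exactly that $\mathcal{I}e_1,\mathcal{I}e_2,\mathcal{I}e_3$ are orthogonal to $e_1,e_2,e_3$.

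In Case~II ($\kappa_1^2+\kappa_2^2=1$, $\kappa_1\in(0,1)$), the relations $E_2'=-\kappa_1 T+\kappa_2 E_3$ and $E_3'=-\kappa_2 E_2$, together with two further differentiations of $\gamma''=\kappa_1 E_2-\gamma$, yield the fourth-order linear ODE
\[
\gamma^{(4)}+2\gamma''+(1-\kappa_1^2)\gamma=0,
\]
whose characteristic polynomial factors as $(\lambda^2+A^2)(\lambda^2+B^2)$ with $A=\sqrt{1+\kappa_1}$ and $B=\sqrt{1-\kappa_1}$. Hence $\gamma$ is a linear combination of $\cos(As),\sin(As),\cos(Bs),\sin(Bs)$ with constant vector coefficients $\alpha_1,\alpha_2,\alpha_3,\alpha_4$. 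Since $A\neq B$, identifying coefficients in $|\gamma|^2\equiv 1$ forces $|\alpha_1|=|\alpha_2|$, $|\alpha_3|=|\alpha_4|$, pairwise orthogonality of distinct $\alpha_i$, and $|\alpha_1|^2+|\alpha_3|^2=1$; combining this with $A^2|\alpha_1|^2+B^2|\alpha_3|^2=1$ coming from $|\gamma'|^2\equiv 1$ yields $|\alpha_i|=\tfrac{1}{\sqrt 2}$, so that $e_i:=\sqrt{2}\,\alpha_i$ form an orthonormal quadruple. Finally, expanding $\langle\gamma',\mathcal{I}\gamma\rangle\equiv 0$ via product-to-sum formulas and projecting onto the linearly independent trigonometric modes of frequencies $0$, $A-B$ and $A+B$ yields precisely the vanishing of $\langle e_1,\mathcal{I}e_3\rangle,\langle e_1,\mathcal{I}e_4\rangle,\langle e_2,\mathcal{I}e_3\rangle,\langle e_2,\mathcal{I}e_4\rangle$ together with the linear relation $A\langle e_1,\mathcal{I}e_2\rangle+B\langle e_3,\mathcal{I}e_4\rangle=0$.

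The principal technical obstacle is this last step in Case~II: one must expand $\langle\gamma',\mathcal{I}\gamma\rangle$ as a trigonometric polynomial, exploit the antisymmetry $a_{ij}=-a_{ji}$ of $a_{ij}:=\langle e_i,\mathcal{I}e_j\rangle$, and check that the modes at frequencies $0$, $A-B$, $A+B$ are genuinely linearly independent (which is ensured by $0<A-B<A+B$ when $\kappa_1\in(0,1)$). Once this bookkeeping is carried out, all the stated orthogonality and linear relations fall out by matching coefficients.
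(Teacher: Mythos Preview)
The paper does not actually prove this theorem here; it merely quotes the result from \cite{DFCO1}. So there is no in-paper argument to compare against. That said, your proposal is correct and is the natural route (and almost certainly the one taken in \cite{DFCO1}): invoke Theorem~\ref{teocase1}, use the Gauss formula of $\mathbb{S}^{2n+1}\subset\mathbb{E}^{2n+2}$ to turn the Frenet system into a constant-coefficient linear ODE for $\gamma$, solve it, and then impose $|\gamma|^2\equiv 1$, $|\gamma'|^2\equiv 1$ and the Legendre condition $\langle\gamma',\mathcal{I}\gamma\rangle\equiv 0$ by matching trigonometric coefficients.

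Two small remarks on the write-up. In Case~I the qualifier ``bounded solutions'' is superfluous: the characteristic roots of $\gamma'''+2\gamma'=0$ are $0,\pm i\sqrt{2}$, so every solution already has the form $C_0+\alpha\cos(\sqrt{2}s)+\beta\sin(\sqrt{2}s)$. In Case~II you rely on the linear independence of the trigonometric modes appearing in $|\gamma|^2$, namely those at frequencies $0,2A,2B,A{+}B,A{-}B$; note that $A{-}B=2B$ occurs precisely when $\kappa_1=\tfrac{4}{5}$, so two of these modes can coincide. This does not spoil the conclusion (the merged equations are still consistent, and the missing relations are recovered from $|\gamma'|^2\equiv 1$ or, more directly, from evaluating $\gamma,\gamma',\gamma'',\gamma'''$ at $s=0$ in terms of the Frenet frame), but a polished version should acknowledge this exceptional value or bypass the issue via initial conditions rather than pure Fourier independence.
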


\begin{remark} For the Cases II
and III we also obtained the explicit equations of
proper-biharmonic Legendre curves in odd dimensional spheres
endowed with the deformed Sasakian structure introduced in
\cite{Tanno}.
\end{remark}

In \cite{SMYO} are introduced the complex torsions for a Frenet
curve in a complex manifold. In the same way, for
$\gamma:I\rightarrow N$ a Legendre Frenet curve of osculating
order $r$ in a Sasakian manifold $(N^{2n+1},\varphi,\xi,\eta,g)$,
we define the $\varphi$-torsions $\tau_{ij}=g(E_{i},\varphi
E_{j})=-g(\varphi E_{i},E_{j})$, $i,j=1,\ldots,r$, $i<j$.

It is easy to see that

\begin{proposition}
Let $\gamma:I\to N(c)$ be a proper-biharmonic Legendre Frenet
curve in a Sasakian space form $N(c)$, $c\neq 1$. Then $c>-3$ and
$\tau_{12}$ is constant.
\end{proposition}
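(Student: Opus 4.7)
The plan is to project the equation $\tau_2(\gamma)=0$ onto the first three Frenet vectors and to couple the resulting scalar relations with an evolution formula for the $\varphi$-torsion $\tau_{12}$.

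Since $\varphi$ is skew-adjoint with respect to $g$, the last term of $\tau_2(\gamma)$ (the one pointing along $\varphi T$) is orthogonal to $T$, so the $E_1$-projection reduces to $-3\kappa_1\kappa_1'=0$, forcing $\kappa_1$ to be a positive constant. Projecting onto $E_2$ and $E_3$, and using $g(\varphi T,E_j)=-\tau_{1j}$, one obtains after dividing by $\kappa_1$
\begin{align*}
\kappa_1^2+\kappa_2^2 &=\tfrac{c+3}{4}+\tfrac{3(c-1)}{4}\tau_{12}^2,\\
\kappa_2' &=-\tfrac{3(c-1)}{4}\tau_{12}\tau_{13}.
\end{align*}

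The crux is an evolution formula for $\tau_{12}$. Differentiating $\tau_{12}=g(E_1,\varphi E_2)$ along $T$ and applying the Sasakian identity $(\nabla_X\varphi)Y=g(X,Y)\xi-\eta(Y)X$, the term $(\nabla_T\varphi)E_2$ vanishes once one knows $\eta(E_2)=0$; this in turn follows from $\gamma$ being Legendre, since differentiating $\eta(T)=0$ and using $\nabla_T\xi=-\varphi T$ yields $\eta(\nabla_T T)=0$. The Frenet formulas then give $\tau_{12}'=\kappa_2\tau_{13}$. Substituting $\tau_{13}=\tau_{12}'/\kappa_2$ into the $E_3$-equation and integrating produces $\kappa_2^2+\tfrac{3(c-1)}{4}\tau_{12}^2=\text{const}$; comparing this with the first displayed identity and using $c\neq 1$ forces $\tau_{12}^2$, and hence $\tau_{12}$, to be constant.

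The inequality $c>-3$ is then immediate from the first identity: if $c\leq -3$, then $c+3\leq 0$ and $3(c-1)/4<0$, so its right-hand side would be $\leq 0$, contradicting $\kappa_1^2+\kappa_2^2>0$ for a proper-biharmonic curve. The one delicate step I anticipate is the derivation of $\tau_{12}'=\kappa_2\tau_{13}$ together with the auxiliary fact $\eta(E_2)=0$; once these are in hand, everything else is algebra inside the Frenet frame.
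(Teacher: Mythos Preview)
Your argument is correct and self-contained, but it differs in spirit from what the paper has in mind. The paper places this proposition immediately after the four classification theorems (Cases~II--IV for $c\neq 1$) and prefaces it with ``It is easy to see that''; the intended proof is simply to read the conclusion off from those theorems: for $c\neq 1$ one is in Case~II, III or~IV, and in each of these the proper-biharmonic hypothesis forces $c>-3$ (otherwise the curve is a geodesic) and yields $\tau_{12}=0$, $\tau_{12}=\pm 1$, or $\tau_{12}=-\cos\alpha_0$ with $\alpha_0$ constant, respectively. Your route instead projects $\tau_2(\gamma)=0$ onto $E_1,E_2,E_3$, couples the $E_3$-equation with the evolution law $\tau_{12}'=\kappa_2\tau_{13}$, and extracts two independent first integrals involving $\kappa_2^2$ and $\tau_{12}^2$; this has the merit of bypassing the full case analysis and giving a uniform computation. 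One small point to tidy up: your integration step divides by $\kappa_2$, which tacitly assumes $r\geq 3$; when $r=2$ there is no $E_3$, but the same derivative computation then gives $\tau_{12}'=0$ directly, so the conclusion still holds.
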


Moreover

\begin{proposition}\label{p1} If $\gamma$ is a proper-biharmonic Legendre
Frenet curve in a Sasakian space form $N(c)$, $c>-3$, $c\neq1$, of
osculating order $r<4$, then it is a circle or a helix with
constant $\varphi$-torsions.
\end{proposition}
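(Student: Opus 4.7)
The plan is to combine the case analysis from Theorems~\ref{teocase2}, \ref{teocase3}, and \ref{teocase4} with a short direct differentiation of the $\varphi$-torsions. By the preceding proposition, $\tau_{12}=g(T,\varphi E_2)$ is constant, and writing $\tau_{12}=-g(E_2,\varphi T)$ we see this constant is either $0$ (Case~II), $\pm 1$ (Case~III), or some other value (Case~IV). Since Case~IV forces $r\ge 4$ by Theorem~\ref{teocase4}, the hypothesis $r<4$ leaves only Cases~II and III.

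In Case~III, Theorem~\ref{teocase3} gives $\{E_1,E_2,E_3\}=\{T,\varphi T,\xi\}$ with $\kappa_1=\sqrt{c-1}$ and $\kappa_2=1$, so $\gamma$ is a helix; a direct evaluation using $\varphi\xi=0$ and $\varphi^2 T=-T$ (from the Legendre condition $\eta(T)=0$) yields $\tau_{12}=-1$, $\tau_{13}=0$, $\tau_{23}=0$. In Case~II, Theorem~\ref{teocase2} already delivers that $\gamma$ is a circle ($r=2$) or a helix ($r=3$) with $\kappa_1,\kappa_2$ constant; for $r=2$ the only $\varphi$-torsion is $\tau_{12}=0$, and nothing more is required.

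The only real computation is in Case~II with $r=3$, where I must show $\tau_{13}$ and $\tau_{23}$ are constant. First, differentiating $\eta(T)=0$ along $T$ and using the Sasakian identity $\nabla_T\xi=-\varphi T$ gives $\kappa_1\eta(E_2)=0$, hence $\eta(E_2)=0$. Differentiating $\tau_{12}=g(T,\varphi E_2)$ using $(\nabla_X\varphi)Y=g(X,Y)\xi-\eta(Y)X$, the Frenet formulas, and $g(E_i,\varphi E_i)=0$ (skew-symmetry of $\varphi$ with respect to $g$) produces
\[
\tau_{12}' \;=\; \kappa_2\,\tau_{13}-\eta(E_2) \;=\; \kappa_2\,\tau_{13}.
\]
Since $\tau_{12}=0$ is constant and $\kappa_2>0$, this forces $\tau_{13}\equiv 0$; an analogous differentiation gives $\tau_{23}' = -\kappa_1\,\tau_{13} = 0$, so $\tau_{23}$ is constant. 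The only obstacle is the bookkeeping in these derivative computations, where one must carefully cancel the many extra terms using $\varphi\xi=0$, $\eta\circ\varphi=0$, and $g(E_i,\varphi E_i)=0$; no individual step is deep.
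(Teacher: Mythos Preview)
Your proof is correct and follows essentially the same strategy as the paper: use Theorems~\ref{teocase2}--\ref{teocase4} to reduce to the situations $\tau_{12}=0$ or $\tau_{12}=\pm 1$, observe that the circle case is immediate, and then for helices verify that $\tau_{13}$ and $\tau_{23}$ are constant via the Frenet equations and the Sasakian identities $(\nabla_X\varphi)Y=g(X,Y)\xi-\eta(Y)X$, $\nabla_X\xi=-\varphi X$.

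The only organizational difference is in the helix computation. You split Cases~II and III, handling III by direct evaluation of the explicit frame $\{T,\varphi T,\xi\}$, and in Case~II you obtain $\tau_{13}=0$ by differentiating $\tau_{12}$ (using $\tau_{12}=0$), then $\tau_{23}'=-\kappa_1\tau_{13}=0$ by differentiating $\tau_{23}$. The paper instead treats the helix case uniformly: it computes $\tau_{13}=0$ directly from $\tau_{13}=\frac{1}{\kappa_2}g(E_2,\varphi\nabla_T T+\xi)$ (which does not use the value of $\tau_{12}$, so covers both cases at once), and then derives the general identity $\tau_{23}=\frac{1}{\kappa_1}\big(\tau_{13}'+\kappa_2\tau_{12}+\eta(E_3)\big)$ together with $\eta(E_3)=-\frac{1}{\kappa_2}\tau_{12}$ to conclude $\tau_{23}$ is constant. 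Either route works; the paper's has the small advantage of not needing a separate treatment of Case~III.
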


\begin{proof} From Theorems \ref{teocase2}, \ref{teocase3} and \ref{teocase4} we
see that if $\gamma$ is a proper-biharmonic Legendre Frenet curve
of osculating order $r<4$, then $\tau_{12}=0$ or $\tau_{12}=\pm1$
and, obviously, we only have to prove that when $\gamma$ is a
helix then $\tau_{13}$ and $\tau_{23}$ are constants.

Indeed, by using the Frenet equations of $\gamma$, we have
\begin{eqnarray*}
\tau_{13}&=& g(E_{1},\varphi E_{3})=-\frac{1}{\kappa_{2}}g(\varphi
E_{1},\nabla_{E_{1}}E_{2}+\kappa_{1}E_{1})=-\frac{1}{\kappa_{2}}g(\varphi
E_{1},\nabla_{E_{1}}E_{2})\\ \\
&=&\frac{1}{\kappa_{2}}g(E_{2},\nabla_{E_{1}}\varphi
E_{1})=\frac{1}{\kappa_{2}}g(E_{2},\varphi\nabla_{E_{1}}E_{1}+\xi)=0
\end{eqnarray*}
since
$$
g(E_{2},\xi)=\frac{1}{\kappa_{1}}g(\nabla_{E_{1}}E_{1},\xi)=
-\frac{1}{\kappa_{1}}g(E_{1},\nabla_{E_{1}}\xi)=\frac{1}{\kappa_{1}}g(E_{1},\varphi
E_{1})=0.
$$

On the other hand, it is easy to see that for any Frenet curve of
osculating order $3$ we have
$\tau_{23}=\frac{1}{\kappa_{1}}(\tau_{13}'+\kappa_{2}\tau_{12}+\eta(E_{3}))$
and
\begin{eqnarray*}
\eta(E_{3})&=&g(E_3,\xi)=\frac{1}{\kappa_{2}}\Big(g(\nabla_{E_1}E_{2},\xi)
+\kappa_{1}g(E_1,\xi)\Big)
=-\frac{1}{\kappa_2}g(E_{2},\nabla_{E_{1}}\xi)\\
\\&=&-\frac{1}{\kappa_{2}}\tau_{12}.
\end{eqnarray*}
In conclusion,
$\tau_{23}=\frac{1}{\kappa_{1}}(\tau_{13}'+\kappa_{2}\tau_{12}
-\frac{1}{\kappa_{2}}\tau_{12})=\textnormal{constant}$.

\end{proof}

\begin{proposition} If $\gamma$ is a proper-biharmonic Legendre
Frenet curve in a Sasakian space form $N(c)$ of osculating order
$r=4$, then $c\in(\frac{7}{3},5)$ and the curvatures of $\gamma$ are
$$
\kappa_{1}=\frac{\sqrt{c+3}}{2}, \quad
\kappa_2=\frac{1}{2}\sqrt{\frac{6(c-1)(5-c)}{c+3}}, \quad
\kappa_{3}=\frac{1}{2}\sqrt{\frac{3(c-1)(3c-7)}{c+3}}.
$$
Moreover, the $\varphi$-torsions of $\gamma$ are given by
$$
\left\{\begin{array}{ccc} \tau_{12}=\mp\sqrt{\frac{2(5-c)}{c+3}}, &
\tau_{13}=0, & \tau_{14}=\pm\sqrt{\frac{3c-7}{c+3}}, \\ \\
\tau_{23}=\mp\frac{3c-7}{\sqrt{3(c-1)(c+3)}}, & \tau_{24}=0, &
\tau_{34}=\pm\sqrt{\frac{2(5-c)(3c-7)}{3(c-1)(c+3)}}.
\end{array}\right.
$$
\end{proposition}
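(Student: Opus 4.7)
The plan is to start from what Theorem \ref{teocase4} already delivers for $r=4$: $\alpha_0,\kappa_1,\kappa_2,\kappa_3$ are constants with $\kappa_i>0$, $\varphi T=\cos\alpha_0\,E_2+\sin\alpha_0\,E_4$, and the two scalar relations $\kappa_1^2+\kappa_2^2=\frac{c+3}{4}+\frac{3(c-1)}{4}\cos^2\alpha_0$ and $\kappa_2\kappa_3=-\frac{3(c-1)}{8}\sin(2\alpha_0)$. The goal is to supplement these with two more independent scalar equations drawn from the Sasakian structure so that $c$ alone determines the configuration, and then read off the torsions.

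The first extra relation comes from studying the functions $\eta(E_i)$ along $\gamma$. Using $\nabla_T\xi=-\varphi T$, one has $T(\eta(E_i))=-g(\varphi T,E_i)+\eta(\nabla_T E_i)$. A short induction on $i$ shows $\eta(E_1)=\eta(E_2)=\eta(E_4)=0$ and $\eta(E_3)=\cos\alpha_0/\kappa_2$; the step producing $\eta(E_4)=0$ uses the closure $\nabla_T E_4=-\kappa_3 E_3$ that is specific to $r=4$, and differentiating $\eta(E_4)$ once more yields
$$\kappa_2\sin\alpha_0+\kappa_3\cos\alpha_0=0.\qquad(\mathrm{C})$$
Substituting $\kappa_3=-\kappa_2\tan\alpha_0$ into $\kappa_2\kappa_3=-\frac{3(c-1)}{4}\sin\alpha_0\cos\alpha_0$ forces $\kappa_2^2=\frac{3(c-1)}{4}\cos^2\alpha_0$, which combined with the first relation above immediately gives $\kappa_1^2=\frac{c+3}{4}$.

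To pin down $\cos^2\alpha_0$ as a function of $c$, I compute $\nabla_T\varphi T$ in two ways. The Sasakian identity $(\nabla_T\varphi)T=g(T,T)\xi-\eta(T)T=\xi$ yields $\nabla_T\varphi T=\kappa_1\varphi E_2+\xi$, while differentiating $\varphi T=\cos\alpha_0 E_2+\sin\alpha_0 E_4$ via the Frenet equations yields $\nabla_T\varphi T=-\kappa_1\cos\alpha_0 E_1+(\kappa_2\cos\alpha_0-\kappa_3\sin\alpha_0)E_3$. Solving for $\xi$ and taking the squared norm, using $|\xi|^2=1$, $|\varphi E_2|^2=1-\eta(E_2)^2=1$, $g(E_1,\varphi E_2)=-\cos\alpha_0$, and $g(\varphi E_2,\xi)=0$ (from $\varphi\xi=0$), together with the simplification $\kappa_2\cos\alpha_0-\kappa_3\sin\alpha_0=\kappa_2/\cos\alpha_0$ that (C) produces, yields the clean identity
$$\frac{\kappa_2^2}{\cos^2\alpha_0}-\kappa_1^2\sin^2\alpha_0=1.$$
Inserting the values of $\kappa_1^2$ and $\kappa_2^2$ collapses this to $(c+3)\sin^2\alpha_0=3c-7$, so $\sin^2\alpha_0=\frac{3c-7}{c+3}$ and $\cos^2\alpha_0=\frac{2(5-c)}{c+3}$. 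Positivity of both (with $c>-3$ from Theorem \ref{teocase4}) forces $c\in(7/3,5)$, and back-substitution produces the stated formulas for $\kappa_2$ and $\kappa_3$.

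The $\varphi$-torsions are then extracted directly: $\tau_{12}=-\cos\alpha_0$, $\tau_{14}=-\sin\alpha_0$, and $\tau_{13}=0$ are read off from $\tau_{1j}=-g(\varphi T,E_j)$; $\tau_{24}=0$ follows by taking the $E_4$-component of the vector identity that expresses $\xi$; $\tau_{23}$ comes from its $E_3$-component (using $g(\varphi E_2,E_3)=-\tau_{23}$) and reduces after simplification to $\mp\frac{3c-7}{\sqrt{3(c-1)(c+3)}}$; finally $\tau_{34}$ is obtained by differentiating once more and matching the $E_4$-components of $\nabla_T^2\varphi T=\kappa_1\kappa_2\,\varphi E_3-(\kappa_1^2+1)\varphi T$ with the Frenet expansion. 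The main obstacle I anticipate is strict sign bookkeeping around the antisymmetry $g(\varphi X,Y)=-g(X,\varphi Y)$, which if misapplied makes the derivations of (C) and of the $|\xi|^2=1$ identity appear mutually contradictory; once these signs are handled consistently, the remaining algebra is routine and collapses to the closed forms in the statement.
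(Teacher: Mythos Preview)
Your argument is correct and follows essentially the same route as the paper: both rest on the vector identity
\[
\kappa_1\varphi E_2+\xi=-\kappa_1\cos\alpha_0\,E_1+(\kappa_2\cos\alpha_0-\kappa_3\sin\alpha_0)E_3,
\]
obtained by computing $\nabla_T\varphi T$ in two ways, together with the data supplied by Theorem~\ref{teocase4}. The differences are only tactical. The paper derives your relation (C) by pairing the identity above with $\xi$ (after computing $\eta(E_3)=\cos\alpha_0/\kappa_2$ as in Proposition~\ref{p1}); your recursive computation of the $\eta(E_i)$, ending with the derivative of $\eta(E_4)$, is an equivalent path. For the key scalar relation $\kappa_2^2/\cos^2\alpha_0-\kappa_1^2\sin^2\alpha_0=1$, the paper pairs the identity separately with $E_3$ and with $\varphi E_2$ and then eliminates $\tau_{23}$ between the two resulting equations; your squared-norm computation is a slightly cleaner shortcut that avoids $\tau_{23}$ altogether (note that what actually yields the clean formula is the norm of the full left-hand side $\kappa_1\varphi E_2+\xi$, not of $\xi$ alone---solving for $\xi$ first would reintroduce a $g(\varphi E_2,E_3)$ cross term). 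Finally, the paper reads $\tau_{34}$ directly from the $\varphi E_4$-component of the same identity, which is simpler than your second differentiation, though both give the same value.
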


\begin{proof}
Let $\gamma$ be a proper-biharmonic Legendre Frenet curve in $N(c)$
of osculating order $r=4$. Then $c\neq 1$ and $\tau_{12}$ is
different from $0$, $1$ or $-1$. From Theorem \ref{teocase4} we have
$\varphi E_{1}=\cos\alpha_{0}E_{2}+\sin\alpha_{0}E_{4}$. It results
that
$$
\tau_{12}=-\cos\alpha_0, \quad \tau_{13}=0, \quad
\tau_{14}=-\sin\alpha_0, \quad \hbox{and} \quad \tau_{24}=0.
$$
In order to prove that $\tau_{23}$ is constant we differentiate
the expression of $\varphi E_{1}$ along $\gamma$ and using the
Frenet equations we obtain
\begin{eqnarray*}
\nabla_{E_1}\varphi E_1&=&\cos\alpha_0\nabla_{E_1}E_2
+\sin\alpha_0\nabla_{E_1}E_4 \\
&=&-\kappa_1\cos\alpha_0
E_1+(\kappa_2\cos\alpha_0-\kappa_3\sin\alpha_0)E_3.
\end{eqnarray*}
On the other hand, $\nabla_{E_1}\varphi E_1=\kappa_1\varphi
E_2+\xi$ and therefore we have
\begin{equation}\label{eq:2}
\kappa_1\varphi E_2+\xi=-\kappa_1\cos\alpha_0
E_1+(\kappa_2\cos\alpha_0-\kappa_3\sin\alpha_0)E_3.
\end{equation}
We take the scalar product in \eqref{eq:2} with $\xi$ and obtain
\begin{equation}\label{eq:2.1}
(\kappa_2\cos\alpha_0-\kappa_3\sin\alpha_0)\eta(E_3)=1.
\end{equation}
In the same way as in the proof of Proposition \ref{p1} we get
\begin{eqnarray*}
\eta(E_{3})&=&g(E_3,\xi)=\frac{1}{\kappa_{2}}\Big(g(\nabla_{E_1}E_{2},\xi)+\kappa_{1}g(E_1,\xi)\Big)
=-\frac{1}{\kappa_2}g(E_{2},\nabla_{E_{1}}\xi)\\
\\&=&-\frac{1}{\kappa_{2}}\tau_{12}=\frac{\cos\alpha_0}{\kappa_2}
\end{eqnarray*}
and then, from \eqref{eq:2.1},
$$
\kappa_2\sin\alpha_0=-\kappa_3\cos\alpha_0.
$$
Therefore $\alpha_0\in(\frac{\pi}{2},\pi)\cup(\frac{3\pi}{2},2\pi)$.

\noindent Next, from Theorem \ref{teocase4}, we have
$$
\kappa_{1}^{2}=\frac{c+3}{4}, \quad
\kappa^{2}_2=\frac{3(c-1)}{4}\cos^{2}\alpha_0, \quad
\kappa^{2}_{3}=\frac{3(c-1)}{4}\sin^{2}\alpha_0,
$$
and so $c$ must be greater than $1$.

Now, we take the scalar product in \eqref{eq:2} with $E_3$, $\varphi
E_2$ and $\varphi E_4$, respectively, and we get
\begin{equation}\label{eq:3}
\kappa_1\tau_{23}=-(\kappa_2\cos\alpha_0-\kappa_3\sin\alpha_0)+\eta(E_{3})=
-\frac{\kappa_2}{\cos\alpha_0}+\frac{\cos\alpha_0}{\kappa_2}
\end{equation}
\begin{equation}\label{eq:4}
\kappa_1\sin^2\alpha_0=-(\kappa_2\cos\alpha_0-\kappa_3\sin\alpha_0)\tau_{23}=
-\frac{\kappa_2}{\cos\alpha_0}\tau_{23}
\end{equation}
\begin{equation}\label{eq:5}
0=\kappa_1\cos\alpha_0\sin\alpha_0+(\kappa_2\cos\alpha_0-\kappa_3\sin\alpha_0)\tau_{34}=
\kappa_1\cos\alpha_0\sin\alpha_0+\frac{\kappa_2}{\cos\alpha_0}\tau_{34}.
\end{equation}
and then, equations \eqref{eq:3} and \eqref{eq:4} lead to
$$
\kappa_{1}^{2}\sin^{2}\alpha_0=\frac{\kappa_{2}^{2}}{\cos^{2}\alpha_0}-1.
$$
We come to the conclusion $\sin^{2}\alpha_0=\frac{3c-7}{c+3}$, so
$c\in(\frac{7}{3},5)$, and then we obtain the expressions of the
curvatures and the $\varphi$-torsions.
\end{proof}

\begin{remark} The proper-biharmonic Legendre curves given by
Theorem \ref{curv2s2n+1,1} (for the case $c=1$) have also constant
$\varphi$-torsions.
\end{remark}

\section{A Method To Obtain Biharmonic Submanifolds in a Sasakian
Space Form}

In \cite{DFCO1} we gave a method to obtain proper-biharmonic
anti-invariant submanifolds in a Sasakian space form from
proper-biharmonic integral submanifolds.

\begin{theorem}[\cite{DFCO1}] \label{teorema1}
Let $(N^{2n+1},\varphi,\xi,\eta,g)$ be a strictly regular Sasakian
space form with constant $\varphi$-sectional curvature $c$ and let
$\textbf{i}:M\rightarrow N$ be an $r$-dimensional integral
submanifold of $N$, $1\leq r\leq n$. Consider
$$
F:\widetilde{M}=I\times M\rightarrow N,\ \ \
F(t,p)=\phi_{t}(p)=\phi_{p}(t)
$$
where $I=\mathbb{S}^{1}$ or $I=\mathbb{R}$ and $\{\phi_{t}\}_{t\in
I}$ is the flow of the vector field $\xi$. Then
$F:(\widetilde{M},\widetilde{g}=dt^{2}+\textbf{i}^{\ast}g)\to N$
is a Riemannian immersion and it is proper-biharmonic if and only
if $M$ is a proper-biharmonic submanifold of $N$.
\end{theorem}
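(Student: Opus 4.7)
The plan is to exploit that $\xi$ is a unit Killing vector field whose flow $\{\phi_t\}$ consists of isometries preserving the whole Sasakian structure, and then to reduce the biharmonicity of $F$ to a pointwise identity at $t=0$ by $\phi_t$-equivariance. First I would check that $F$ is a Riemannian immersion: since $\|\xi\|=1$, $d\phi_t(\xi)=\xi$, and $\phi_t$ preserves $\eta$, the tangent space at $F(t,p)$ splits orthogonally as $\mathbb{R}\xi\oplus d\phi_t(T_pM)$, with the second summand lying in $\xi^{\perp}$ because $M$ is integral ($T_pM\subset\ker\eta$). Thus $dF$ is an isometry on each fibre with respect to $\widetilde{g}=dt^{2}+\mathbf{i}^{\ast}g$.

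Next I would derive the equivariance
$$
\tau(F)(t,p)=d\phi_t\bigl(\tau(\mathbf{i})(p)\bigr).
$$
Using the product connection on $\widetilde{M}$ and the fact that $\phi_t$ intertwines $\nabla^{N}$, the $(\partial_t,\partial_t)$-term in $\tau(F)$ vanishes because $\nabla^{N}_{\xi}\xi=-\varphi\xi=0$, while the $(e_i,e_i)$-terms reproduce $d\phi_t$ applied to the trace of the second fundamental form of $\mathbf{i}$. I would also record that $\tau(\mathbf{i})\perp\xi$, which follows from integrality via $g(\nabla^{N}_{e_i}e_i,\xi)=-g(e_i,\nabla^{N}_{e_i}\xi)=g(e_i,\varphi e_i)=0$.

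The heart of the proof is the analogous equivariance $\tau_{2}(F)(t,p)=d\phi_t(\tau_{2}(\mathbf{i})(p))$; by $\phi_t$-equivariance it suffices to verify it at $t=0$. Splitting the rough Laplacian along the orthonormal frame $\{\partial_t,e_1,\ldots,e_r\}$ and using that $\phi_t$ is an isometry, the horizontal piece contributes $-\Delta^{\mathbf{i}}\tau(\mathbf{i})$, while the vertical piece is computed from $[\xi,d\phi_t V]=0$ (since $d\phi_t$ is the flow of $\xi$) together with $\nabla^{N}_{X}\xi=-\varphi X$ to give
$$
\nabla^{F}_{\partial_t}\nabla^{F}_{\partial_t}\tau(F)\big|_{(0,p)}=\varphi^{2}\tau(\mathbf{i})=-\tau(\mathbf{i}),
$$
where the last equality uses $\varphi^{2}=-I+\eta\otimes\xi$ together with $\tau(\mathbf{i})\perp\xi$. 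Separately, the Sasakian curvature identity $R^{N}(X,\xi)Y=\eta(Y)X-g(X,Y)\xi$ yields $R^{N}(\xi,\tau(\mathbf{i}))\xi=-\tau(\mathbf{i})$, so
$$
\trace R^{N}(dF,\tau(F))dF\big|_{(0,p)}=-\tau(\mathbf{i})+\trace R^{N}(d\mathbf{i},\tau(\mathbf{i}))d\mathbf{i}.
$$
Substituting into $\tau_{2}(F)=-\Delta^{F}\tau(F)-\trace R^{N}(dF,\tau(F))dF$, the two $\pm\tau(\mathbf{i})$ contributions cancel and leave $\tau_{2}(F)|_{(0,p)}=\tau_{2}(\mathbf{i})|_{p}$.

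Since $d\phi_t$ is a linear isomorphism, this yields $\tau(F)=0\iff\tau(\mathbf{i})=0$ and $\tau_{2}(F)=0\iff\tau_{2}(\mathbf{i})=0$, so proper-biharmonicity transfers in both directions. The main obstacle is the delicate cancellation in the preceding step: the vertical contribution to the rough Laplacian and the $\xi$-term from the Sasakian curvature tensor must annihilate each other exactly, and this hinges on both $\tau(\mathbf{i})\perp\xi$ (an integrality feature) and the two characteristic Sasakian identities $\nabla_{X}\xi=-\varphi X$ and $R^{N}(X,\xi)Y=\eta(Y)X-g(X,Y)\xi$.
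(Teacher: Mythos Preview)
Your argument is correct. Note that the present paper does not actually prove this theorem: it is quoted from \cite{DFCO1}, so there is no in-paper proof to compare against. That said, your outline is the natural one and almost certainly coincides with the original: use that $\{\phi_t\}$ are Sasakian isometries to obtain the equivariance $\tau(F)(t,p)=d\phi_t(\tau(\mathbf{i})(p))$ and reduce to $t=0$; compute the extra vertical piece of the rough Laplacian via $\nabla^N_X\xi=-\varphi X$ and $[\xi,d\phi_tV]=0$ to get $\nabla^F_{\partial_t}\nabla^F_{\partial_t}\tau(F)|_{(0,p)}=\varphi^{2}\tau(\mathbf{i})=-\tau(\mathbf{i})$ (using $\tau(\mathbf{i})\perp\xi$, which you correctly derive from integrality); and cancel this against the curvature contribution $R^N(\xi,\tau(\mathbf{i}))\xi=-\tau(\mathbf{i})$ coming from the Sasakian identity $R^N(X,Y)\xi=\eta(Y)X-\eta(X)Y$. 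The remaining horizontal terms reproduce $\tau_2(\mathbf{i})$ on the nose, and the equivalence of harmonicity follows from the same equivariance of $\tau$, completing the ``proper'' part of the statement.
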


The previous Theorem provides a classification result for
proper-biharmonic surfaces in a Sasakian space form, which are
invariant under the flow-action of $\xi$.

\begin{theorem}[\cite{DFCO1}] Let $M^2$ be a surface of $N^{2n+1}(c)$ invariant under
the flow-action of the characteristic vector field $\xi$. Then $M$
is proper-biharmonic if and only if, locally, it is given by
$x(t,s)=\phi_t(\gamma(s))$, where $\gamma$ is a proper-biharmonic
Legendre curve.
\end{theorem}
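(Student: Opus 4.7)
The plan is to reduce the statement to Theorem \ref{teorema1} by constructing, out of an arbitrary $\xi$-invariant surface, the Legendre curve $\gamma$ that generates it.

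First I would note the ``if'' direction is essentially a corollary of Theorem \ref{teorema1}: a Legendre curve is a $1$-dimensional integral submanifold of $N$ (since $r=1\leq n$), so applying that theorem with $M$ replaced by $\gamma$ and $\widetilde{M}=I\times\gamma$ yields that $F(t,s)=\phi_t(\gamma(s))$ is a proper-biharmonic Riemannian immersion precisely when $\gamma$ is proper-biharmonic. The image of $F$ is the $\xi$-invariant surface of the statement, and $F$ is a local isometry onto it, so biharmonicity transfers.

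For the ``only if'' direction, the key step is the local parametrization. Because $M$ is invariant under the flow of $\xi$, the vector field $\xi$ is tangent to $M$ at every point. The line bundle $(\mathbb{R}\xi)^{\perp}\cap TM$ has rank one inside the rank-two bundle $TM$, so around any $p\in M$ there exists a smooth unit curve $\gamma:(-\varepsilon,\varepsilon)\to M$ with $\gamma(0)=p$ and $g(\gamma'(s),\xi_{\gamma(s)})=0$ for all $s$; parametrize it by arc length. Since $\eta(\gamma')=g(\gamma',\xi)=0$, the curve $\gamma$ is a Legendre curve of $N$. Now consider $F(t,s)=\phi_t(\gamma(s))$, where $\{\phi_t\}$ is the flow of $\xi$. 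Because $\xi$ is a Killing unit vector field of the Sasakian metric and $\phi_{t\,*}\xi=\xi$, one has $\partial_t F=\xi$ and $\partial_s F=\phi_{t\,*}\gamma'(s)$, and these two vectors are orthonormal. In particular $F$ is an isometric immersion from $(I\times\gamma,\,dt^2+ds^2)$ into $N$, and since $M$ is $\xi$-invariant the image lies in $M$. By the inverse function theorem $F$ is a local diffeomorphism onto an open piece of $M$, which gives the required local description $x(t,s)=\phi_t(\gamma(s))$.

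Once the parametrization is established, the equivalence between the biharmonicity of $M$ and of $\gamma$ is exactly the content of Theorem \ref{teorema1} applied to the integral submanifold $\gamma$: the Riemannian immersion $F$ constructed there coincides with our local parametrization, and $F$ is proper-biharmonic if and only if $\gamma$ is proper-biharmonic. Since $F$ is a local isometry onto $M$, proper-biharmonicity of $F$ is the same as proper-biharmonicity of $M$, closing the argument. The only real subtlety is checking that the transverse curve inside $M$ can be taken to be Legendre; this is handled precisely by choosing it in the orthogonal complement of $\xi$ within $TM$, which is forced to be one-dimensional by $\dim M=2$. No further computation is needed beyond invoking Theorem \ref{teorema1}.
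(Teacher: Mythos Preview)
Your proposal is correct and follows exactly the route the paper indicates: the paper offers no detailed proof here but simply presents the statement as the classification result furnished by the preceding Theorem~\ref{teorema1}, and your argument fleshes out precisely that reduction. The only content you add beyond the paper's one-line attribution is the (standard) observation that the rank-one distribution $TM\cap\xi^\perp$ integrates to a Legendre curve generating $M$ under the flow of $\xi$, which is the natural way to produce the required $\gamma$.
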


Also, using the standard Sasakian $3$-structure on $\mathbb{S}^7$,
by iteration, Theorem \ref{teorema1} leads to examples of
$3$-dimensional proper-biharmonic submanifolds of $\mathbb{S}^7$.

\section{Biharmonic Hopf Cylinders in a Sasakian Space Form}

Let $(N^{2n+1},\varphi,\xi,\eta,g)$ be a strictly regular Sasakian
manifold and $\bar{\bf i}:\bar{M}\rightarrow\bar N$ a submanifold
of $\bar{N}$. Then $M=\pi^{-1}(\bar{M})$ is the Hopf cylinder over
$\bar{M}$, where $\pi:N\rightarrow \bar N=N/\xi$ is the
Boothby-Wang fibration.

In \cite{Ino} the biharmonic Hopf cylinders in a 3-dimensional
Sasakian space form are classified.

\begin{theorem}[\cite{Ino}] Let $S_{\bar\gamma}$ be a Hopf
cylinder, where $\bar\gamma$ is a curve in the orbit space of
$N^{3}(c)$, parametrized by arc length. We have
\begin{enumerate}
\item[1)] if $c\leqslant 1$, then $S_{\bar\gamma}$ is biharmonic
if and only if it is minimal; \item[2)] if $c>1$, then
$S_{\bar\gamma}$ is proper-biharmonic if and only if the curvature
$\bar\kappa$ of $\bar\gamma$ is constant $\bar\kappa^{2}=c-1$.
\end{enumerate}
\end{theorem}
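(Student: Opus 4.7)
The plan is to pick the natural frame on $S_{\bar\gamma}$ adapted to the Boothby-Wang fibration and reduce the biharmonic equation to a single algebraic condition on the base curvature $\bar\kappa$.

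First, I would take a horizontal lift $\gamma$ of $\bar\gamma$ under $\pi:N^3\to\bar N$; such a $\gamma$ is automatically a Legendre curve with unit tangent $T=\gamma'$, and $S_{\bar\gamma}$ is locally the image of $(s,t)\mapsto\phi_t(\gamma(s))$. Since $\dim N=3$, $\{T,\xi\}$ is a tangent orthonormal frame on $S_{\bar\gamma}$ and $\nu=\varphi T$ is the unique unit normal. Two structural facts will drive everything: $[T,\xi]=0$, because $T$ is the variation field of the $\xi$-flow; and, combining the Sasakian identity $\nabla^N_X\xi=-\varphi X$ with an O'Neill-type argument and the Legendre condition, $\nabla^N_T T=\bar\kappa\,\varphi T$. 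From these, the tangential parts of $\nabla^N_T T,\,\nabla^N_T\xi,\,\nabla^N_\xi T,\,\nabla^N_\xi\xi$ all vanish, so the induced connection on $S_{\bar\gamma}$ is flat and the rough Laplacian collapses to $\Delta^f=-(\nabla^N_T)^2-(\nabla^N_\xi)^2$.

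Second, I would read off the second fundamental form from the same computation: $B(T,T)=\bar\kappa\,\varphi T$, $B(T,\xi)=-\varphi T$, $B(\xi,\xi)=0$, hence
$$
H=\tfrac{\bar\kappa}{2}\,\varphi T,
$$
so $S_{\bar\gamma}$ is minimal iff $\bar\kappa\equiv 0$. Substituting $H$ into the biharmonic equation in its general form $\Delta^f H+\mathrm{trace}\,R^N(\cdot,H)\cdot=0$, I would expand $\nabla^N_T H$ and $\nabla^N_\xi H$ using the identities $(\nabla^N_X\varphi)Y=g(X,Y)\xi-\eta(Y)X$ and $\nabla^N_X\xi=-\varphi X$, and compute the curvature trace from
$$
R^N(T,\varphi T)T=-c\,\varphi T,\qquad R^N(\xi,\varphi T)\xi=-\varphi T,
$$
which encode that the $\varphi$-sectional curvature is $c$ while every $\xi$-plane has sectional curvature $1$.

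Finally, I would project the resulting identity onto the orthonormal frame $\{T,\varphi T,\xi\}$ of $N$ along $S_{\bar\gamma}$. I expect the $T$- and $\xi$-components to force $\bar\kappa'\equiv 0$, so $\bar\kappa$ is constant along $\bar\gamma$, and the surviving $\varphi T$-component to reduce to the algebraic relation
$$
\bar\kappa\bigl(\bar\kappa^2+1-c\bigr)=0.
$$
The conclusions of the theorem then read off immediately: if $c\leq 1$ the only solution is $\bar\kappa=0$, so biharmonicity is equivalent to minimality; if $c>1$ the nontrivial branch gives precisely the proper-biharmonic cylinders with $\bar\kappa^2=c-1$. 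The main obstacle I foresee is the bookkeeping in the middle step: combining the rough Laplacian, the shape-operator terms and the curvature trace without sign errors, so that the three components separate cleanly into an ODE part and the single algebraic relation above.
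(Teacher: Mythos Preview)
Your computation is correct and leads exactly to $\bar\kappa'=0$ and $\bar\kappa(\bar\kappa^{2}-(c-1))=0$, from which the stated dichotomy follows. Note, however, that the paper does \emph{not} give its own proof of this theorem: it is quoted verbatim from Inoguchi \cite{Ino}. What the paper does supply, immediately afterwards, is the higher-dimensional generalisation (Propositions~\ref{t1} and~\ref{cor1}, from \cite{DFCO2}): a Hopf cylinder over a hypersurface is biharmonic iff the pair of equations $\Delta^{\perp}H=(-\|B\|^{2}+\frac{c(n+1)+3n-1}{2})H$ and $2\,\trace A_{\nabla^{\perp}_{\cdot}H}(\cdot)+n\,\grad(\|H\|^{2})=0$ hold. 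Specialising these to $n=1$ reproduces your computation verbatim: the tangential equation forces $\bar\kappa'=0$, and then $\|B\|^{2}=\bar\kappa^{2}+2$ together with $\frac{c(n+1)+3n-1}{2}\big|_{n=1}=c+1$ gives $\bar\kappa^{2}=c-1$. So your direct approach is precisely the $3$-dimensional instance of the machinery the paper records in general; there is no genuinely different route here to compare against.
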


In \cite{DFCO2} we obtained a geometric characterization of
biharmonic Hopf cylinders of any codimension in an arbitrary
Sasakian space form. A special case of our result is the case when
$\bar{M}$ is a hypersurface.

\begin{proposition}[\cite{DFCO2}]\label{t1}
If $\bar{M}$ is a hypersurface of $\bar{N}$, then
$M=\pi^{-1}(\bar{M})$ is biharmonic if and only if
$$
\left\{\begin{array}{ll}\Delta^{\perp}{H}=\Big(-\|B\|^{2}+
\frac{c(n+1)+3n-1}{2}\Big){H}\\ \\2\trace
A_{\nabla^{\perp}_{\cdot}{H}}(\cdot)+n\grad(\|H\|^{2})=0,
\end{array}\right.
$$
where $B$, $A$ and $H$ are the second fundamental form of $M$ in
$N$, the shape operator and the mean curvature vector field,
respectively, and $\nabla^{\perp}$ and $\Delta^{\perp}$ are the
normal connection and Laplacian on the normal bundle of $M$ in
$N$.
\end{proposition}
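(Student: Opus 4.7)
The strategy is to exploit the canonical splitting of the bitension field of an isometric immersion into its components normal and tangent to $M$, and then to use the very special geometry of the Hopf cylinder to evaluate the ambient curvature contribution explicitly.

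First, I would recall that for any isometric immersion $\mathbf{i}:M^m\to N$ the biharmonic equation $\tau_2(\mathbf{i})=0$ splits into a normal equation involving $\Delta^{\perp}H$, $\trace B(\cdot,A_H\cdot)$ and $\bigl(\trace R^N(\cdot,H)\cdot\bigr)^{\perp}$, and a tangent equation involving $\trace A_{\nabla^{\perp}_{\cdot}H}(\cdot)$, $\tfrac{m}{2}\grad(\|H\|^2)$ and $\bigl(\trace R^N(\cdot,H)\cdot\bigr)^{\top}$. Here $m=\dim M=2n$, the codimension is one, and a direct computation gives $\trace B(\cdot,A_H\cdot)=\|B\|^2H$. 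The proposition will then follow once the two components of $\trace R^N(\cdot,H)\cdot$ have been identified.

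Second, I would record the geometric features of $M=\pi^{-1}(\bar M)$ that drive the calculation: the Reeb field $\xi$ is tangent to $M$; the unit normal to $M$ in $N$ is the horizontal lift of the unit normal to $\bar M$ in $\bar N$, so it lies in $\ker\eta$ and hence $\eta(H)=0$; and since $\varphi$ is skew-adjoint, $g(\varphi H,H)=0$, which in the codimension-one setting forces $\varphi H$ to be tangent to $M$. Third, I would substitute the explicit Sasakian space form curvature tensor
$$
R^N(X,Y)Z=\tfrac{c+3}{4}\bigl(g(Y,Z)X-g(X,Z)Y\bigr)+\tfrac{c-1}{4}\bigl\{\text{Sasakian correction in }\eta,\varphi\bigr\},
$$
and evaluate $\trace R^N(\cdot,H)\cdot$ on an orthonormal frame $\{e_i\}_{i=1}^{2n}$ of $TM$ chosen so that $e_1=\xi$. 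The constant sectional curvature piece contributes $-\tfrac{(c+3)n}{2}H$. In the Sasakian correction every term involving $\eta(H)$ or $g(H,e_i)$ drops out, the term with $g(\varphi e_i,e_i)$ vanishes by skew-adjointness, and the crucial identity is
$$
\sum_{i=1}^{2n}g(\varphi H,e_i)\,\varphi e_i=\varphi^2 H=-H,
$$
available precisely because $\varphi H$ is tangent to $M$ and $\eta(H)=0$. Combined with $\sum_i\eta(e_i)^2=1$, this yields $\trace R^N(\cdot,H)\cdot=-\tfrac{c(n+1)+3n-1}{2}H$, which is purely normal to $M$.

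Finally, inserting this into the splitting produces the normal equation $\Delta^{\perp}H=\bigl(-\|B\|^2+\tfrac{c(n+1)+3n-1}{2}\bigr)H$, and, since the tangent part of the curvature term vanishes, the tangent equation $2\trace A_{\nabla^{\perp}_{\cdot}H}(\cdot)+n\grad(\|H\|^2)=0$, as stated. The principal technical obstacle is the curvature computation: keeping track of every $\varphi$-term in the Sasakian correction and verifying that $\varphi H$ is tangent to $M$ are what allow the entire correction to collapse to a clean scalar multiple of $H$ with trivial tangent part, which is what produces the clean splitting into the two equations in the proposition.
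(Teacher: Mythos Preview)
Your approach is correct and is precisely the standard one: split $\tau_2$ into its tangent and normal components via the well-known decomposition of $\Delta^f H$, and evaluate $\sum_i R^N(e_i,H)e_i$ using the explicit curvature tensor of a Sasakian space form. The key observations you single out---$\eta(H)=0$ because the unit normal of $M$ is horizontal, $\xi$ is tangent to $M$, and $\varphi H$ is tangent to $M$ in codimension one---are exactly what is needed; they force the Sasakian correction to collapse to $\tfrac{c-1}{4}\bigl(H+3\varphi^2 H\bigr)=-\tfrac{c-1}{2}H$, and hence $\sum_i R^N(e_i,H)e_i=-\tfrac{c(n+1)+3n-1}{2}H$ with vanishing tangential part. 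Together with $\trace B(\cdot,A_H\cdot)=\|B\|^2 H$ and $m=2n$, this yields the two displayed equations.

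Note, however, that the present paper is a survey and does not supply its own proof of this proposition: it is quoted from \cite{DFCO2}, where the result is obtained (in fact in arbitrary codimension) by exactly this route---the tangential/normal splitting of the biharmonic equation combined with the computation of the ambient curvature trace. So your argument is essentially the intended one, and there is nothing to add or correct.
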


\begin{proposition}[\cite{DFCO2}]\label{cor1} If $\bar{M}$ is a hypersurface and
$\|\bar{H}\|=\textnormal{constant}\neq 0$, then
$M=\pi^{-1}(\bar{M})$ is proper-biharmonic if and only if
$$
\|\bar{B}\|^{2}=\frac{c(n+1)+3n-5}{2}.
$$
\end{proposition}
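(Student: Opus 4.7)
The plan is to reduce the two biharmonic conditions in Proposition~\ref{t1} to intrinsic and extrinsic data of $\bar M$ in $\bar N$ via the Boothby--Wang submersion $\pi:N\to\bar N$. Since $\dim\bar M=2n-1$, the Hopf cylinder $M=\pi^{-1}(\bar M)$ is a hypersurface of $N^{2n+1}$; a local orthonormal frame on $M$ is $\{\xi,X_1^{\ast},\ldots,X_{2n-1}^{\ast}\}$, where $\{X_i\}$ is an orthonormal frame on $\bar M$ and $X^{\ast}$ denotes horizontal lift. The unit normal of $M$ in $N$ is the horizontal lift $\bar\nu^{\ast}$ of the unit normal $\bar\nu$ of $\bar M$ in $\bar N$.

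First, I would relate the second fundamental form $B$ of $M$ to $\bar B$ by combining the Sasakian identity $\nabla_X\xi=-\varphi X$, the fact that basic horizontal fields commute with $\xi$ (the horizontal distribution is $\xi$-invariant), and the O'Neill formulas for the submersion. The vertical part of $\nabla_{X^{\ast}}Y^{\ast}$ is a multiple of $\xi$, hence tangent to $M$, which together with $\nabla_\xi\xi=0$ yields
$$
B(\xi,\xi)=0,\qquad B(X^{\ast},Y^{\ast})=\bigl(\bar B(X,Y)\bigr)^{\ast},\qquad B(\xi,X^{\ast})=-g(JX,\bar\nu)\,\bar\nu^{\ast},
$$
where $J$ is the K\"ahler structure on $\bar N$. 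Tracing these gives
$$
H=\tfrac{2n-1}{2n}\,\bar H^{\ast},\qquad \|B\|^2=\|\bar B\|^2+2,
$$
the $+2$ coming from $\sum_{i}g(JX_i,\bar\nu)^2=\|J\bar\nu\|^2=1$ (since $J\bar\nu\perp\bar\nu$, so $J\bar\nu$ is tangent to $\bar M$), doubled by the symmetry of $B$.

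Next, from $\|\bar H\|=\textnormal{constant}\neq 0$ and the fact that the normal bundle of the hypersurface $\bar M$ has rank one, we obtain $\bar\nabla^{\perp}\bar\nu=0$, hence $\bar\nabla^{\perp}\bar H=0$. On basic sections the normal connection on $M$ is the lift of the normal connection on $\bar M$, so $\nabla^{\perp}_{X^{\ast}}\bar\nu^{\ast}=(\bar\nabla^{\perp}_X\bar\nu)^{\ast}=0$; moreover $\nabla_\xi\bar\nu^{\ast}=-\varphi\bar\nu^{\ast}=-(J\bar\nu)^{\ast}$ is tangent to $M$, so $\nabla^{\perp}_\xi\bar\nu^{\ast}=0$ as well. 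Therefore $\nabla^{\perp}H\equiv 0$, which yields $\Delta^{\perp}H=0$ and $\grad\|H\|^2=0$. The second equation of Proposition~\ref{t1} is then automatic, while the first reduces to
$$
\Bigl(-\|B\|^2+\tfrac{c(n+1)+3n-1}{2}\Bigr)H=0.
$$
Since $H\neq 0$, combining this with $\|B\|^2=\|\bar B\|^2+2$ gives exactly $\|\bar B\|^2=\tfrac{c(n+1)+3n-5}{2}$.

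The main technical obstacle lies in the first step: carefully translating the Sasakian Levi-Civita connection of $N$ through the Riemannian submersion, in particular extracting the $\varphi$-twisted row $B(\xi,\,\cdot\,)$ and verifying that the vertical part of $\nabla_{X^{\ast}}Y^{\ast}$ is tangential to $M$, so invisible to $B$. Once these submersion identities are in place, the computation of $\Delta^{\perp}H$ and the final algebraic manipulation are immediate.
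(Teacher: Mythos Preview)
The paper does not include a proof of this statement (it is quoted from \cite{DFCO2}), but your argument is correct and is exactly the expected deduction from Proposition~\ref{t1}: the submersion identities give $\|B\|^{2}=\|\bar B\|^{2}+2$ and $H=\tfrac{2n-1}{2n}\,\bar H^{\ast}$, constancy of $\|\bar H\|\neq 0$ forces $\nabla^{\perp}H=0$ (hence $\Delta^{\perp}H=0$ and the tangential equation is vacuous), and the system in Proposition~\ref{t1} collapses to the stated scalar condition. Your identification of the only delicate point---that the vertical O'Neill term and the $\varphi$-twisted row $B(\xi,\cdot)$ must be handled via $\nabla_{X}\xi=-\varphi X$ and $[\xi,X^{\ast}]=0$---is accurate.
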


\begin{remark} From the last result we see that there exist no
proper-biharmonic hypersurfaces of constant mean curvature
$M=\pi^{-1}(\bar{M})$ in $N(c)$ if $c\leq\frac{5-3n}{n+1}$, which
implies that such hypersurfaces do not exist if $c\leq -3$,
whatever the dimension of $N$ is.
\end{remark}

From now on we shall consider $c>-3$.

\noindent In \cite{Takagi} Takagi classified all homogeneous real
hypersurfaces in the complex projective space $\mathbb{C}P^{n}$,
$n>1$, and found five types of such hypersurfaces (see also
\cite{NiebergallRyan}). The first type (with subtypes $A1$ and $A2$)
are described in the following.

We shall consider $u\in(0,\frac{\pi}{2})$ and $r$ a positive
constant given by $\frac{1}{r^{2}}=\frac{c+3}{4}$.

\begin{theorem}[\cite{Takagi}] The geodesic spheres (Type $A1$) in
complex projective space $\mathbb{C}P^{n}(c+3)$ have two distinct
principal curvatures: $\lambda_{2}=\frac{1}{r}\cot u$ of
multiplicity $2n-2$ and $a=\frac{2}{r}\cot(2u)$ of multiplicity $1$.
\end{theorem}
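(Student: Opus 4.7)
The plan is to compute the principal curvatures of the geodesic sphere $S(u)$ of geodesic radius $ru$ centered at a point $p\in\mathbb{C}P^{n}(c+3)$ by solving the Jacobi equation along radial geodesics issuing from $p$. Fix a unit vector $\nu\in T_{p}\mathbb{C}P^{n}(c+3)$, set $\gamma(s)=\exp_{p}(s\nu)$, $q=\gamma(ru)$, and let $N_{q}=\dot\gamma(ru)$ be the outward unit normal to $S(u)$ at $q$. I would invoke the standard identification of the shape operator via Jacobi fields: for a unit $v\in T_{q}S(u)$ realized as $(d\exp_{p})_{ru\nu}(w)$, the principal curvature of $S(u)$ in the direction $v$ is $Y'(ru)/Y(ru)$, where $Y$ is the Jacobi field along $\gamma$ with $Y(0)=0$ and $Y'(0)=w$.

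The structural observation is that $T_{q}S(u)$ splits orthogonally as $\mathbb{R}(JN_{q})\oplus\{N_{q},JN_{q}\}^{\perp}$, where $J$ is the K\"ahler structure on $\mathbb{C}P^{n}(c+3)$. Since $\nabla J=0$, this splitting is preserved by parallel transport along $\gamma$. Moreover, in a complex space form the sectional curvature of a unit orthonormal $2$-plane $\{X,Y\}$ is $\tfrac{c+3}{4}\bigl(1+3\,g(JX,Y)^{2}\bigr)$, so the holomorphic plane $\{N,JN\}$ has sectional curvature $c+3=4/r^{2}$, while every totally real plane $\{N,v\}$ with $v\perp N,JN$ has sectional curvature $(c+3)/4=1/r^{2}$. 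Consequently the Jacobi operator $R(\dot\gamma,\cdot)\dot\gamma$ acts as a scalar on each factor of the splitting, and the eigenspaces of the shape operator at $q$ are exactly these two subspaces, of dimensions $1$ and $2n-2$.

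It then remains to solve $Y''+KY=0$ with initial conditions $Y(0)=0$, $Y'(0)=1$ for the two relevant values $K=4/r^{2}$ and $K=1/r^{2}$; these yield $Y(s)=\tfrac{r}{2}\sin(2s/r)$ and $Y(s)=r\sin(s/r)$ respectively. Evaluating the ratio $Y'(ru)/Y(ru)$ at $s=ru$ immediately produces the principal curvature $a=\tfrac{2}{r}\cot(2u)$ with multiplicity $1$ (the direction $JN$) and $\lambda_{2}=\tfrac{1}{r}\cot u$ with multiplicity $2n-2$ (on the $J$-orthogonal complement), which is the claim. The only delicate step is the justification that the Jacobi operator really respects the $J$-splitting all along $\gamma$; this is precisely where $\nabla J=0$ and the explicit form of the curvature tensor of a complex space form are used, and once that is in place the remaining computation is an elementary ODE.
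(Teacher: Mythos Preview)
The paper does not actually prove this theorem: it is quoted verbatim from Takagi \cite{Takagi} as a classification input and used as a black box in the subsequent biharmonicity computations. So there is no ``paper's own proof'' to compare against.

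That said, your Jacobi-field argument is correct and is the standard self-contained derivation. The splitting $T_qS(u)=\mathbb{R}(JN_q)\oplus\{N_q,JN_q\}^{\perp}$ is indeed parallel along $\gamma$ because $\nabla J=0$, and the curvature tensor of a complex space form of holomorphic sectional curvature $c+3=4/r^2$ acts on these factors with the eigenvalues $4/r^{2}$ and $1/r^{2}$ you state; the solutions of $Y''+KY=0$ with $Y(0)=0$ then give the ratios $Y'(ru)/Y(ru)=\tfrac{2}{r}\cot(2u)$ and $\tfrac{1}{r}\cot u$ with the right multiplicities. One small caveat worth making explicit in a write-up is the sign convention linking $Y'/Y$ to the shape operator (inward versus outward normal, and $A_N X=\pm\nabla_X N$), since the stated values $\lambda_2,\,a$ are positive for small $u$ and the sign depends on that choice; Takagi's convention matches yours. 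This is a clean alternative to quoting the result, and in fact more transparent than Takagi's original classification, which proceeds via the theory of isotropy orbits rather than a direct Jacobi computation.
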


\begin{theorem}[\cite{Takagi}] The hypersurfaces of Type $A2$ in
complex projective space $\mathbb{C}P^{n}(c+3)$ have three distinct
principal curvatures: $\lambda_{1}=-\frac{1}{r}\tan u$ of
multiplicity $2p$, $\lambda_{2}=\frac{1}{r}\cot u$ of multiplicity
$2q$, and $a=\frac{2}{r}\cot(2u)$ of multiplicity $1$, where $p>0$,
$q>0$, and $p+q=n-1$.
\end{theorem}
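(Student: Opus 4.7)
The plan is to realize the Type $A2$ hypersurface $M \subset \mathbb{C}P^n(c+3)$ as the tube of (suitably normalized) geodesic radius $u$ around a totally geodesic complex submanifold $\mathbb{C}P^p \hookrightarrow \mathbb{C}P^n(c+3)$, with $p+q=n-1$. Equivalently, via the Hopf fibration $\mathbb{S}^{2n+1}\to\mathbb{C}P^n$, the preimage $\pi^{-1}(M)$ is the generalized Clifford product $\mathbb{S}^{2p+1}(r\cos u)\times \mathbb{S}^{2q+1}(r\sin u)$ inside $\mathbb{S}^{2n+1}(r)$. Smoothness of $M$ for $u\in(0,\pi/2)$ follows from the absence of focal points of $\mathbb{C}P^p$ along its normal geodesics in that range.

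I would next compute the shape operator of $M$ via Jacobi fields along a normal geodesic. Fix $x_0\in\mathbb{C}P^p$, a unit normal $\nu_0\in N_{x_0}\mathbb{C}P^p$, and let $\gamma(t)=\exp_{x_0}(t\nu_0)$ reach $q=\gamma(u)\in M$. Tangent vectors to $M$ at $q$ are produced by variations of $\gamma$ through normal geodesics of $\mathbb{C}P^p$ --- either by moving the foot $x_0$ along $\mathbb{C}P^p$ with parallel-translated normal, or by rotating $\nu_0$ within the fibre of the unit normal bundle. Each such variation produces a Jacobi field $Y$ along $\gamma$; a standard Riccati-type formula identifies the Weingarten operator of $M$ at $q$, relative to the outward normal $\gamma'(u)$, as the map $Y(u)\mapsto -Y'(u)$, and the three eigenvalues come from splitting the Jacobi data into three $J$-invariant families.

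The first family is the real line $\mathbb{R}\,J\nu_0 \subset N_{x_0}\mathbb{C}P^p$, on which the Jacobi operator of $\mathbb{C}P^n(c+3)$ acts with the holomorphic eigenvalue $c+3=4/r^2$; the relevant Jacobi field is proportional to $\sin(2t/r)V(t)$ and yields $a=\frac{2}{r}\cot(2u)$ of multiplicity $1$. The second is the parallel transport of $T_{x_0}\mathbb{C}P^p$ (real dimension $2p$); total geodesicity of $\mathbb{C}P^p$ forces $Y'(0)=0$, the Jacobi operator acts with the totally real eigenvalue $(c+3)/4=1/r^2$, and the field $\cos(t/r)V(t)$ yields $\lambda_1=-\frac{1}{r}\tan u$ of multiplicity $2p$. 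The third is the parallel transport of $(\mathbb{R}\nu_0\oplus\mathbb{R}\,J\nu_0)^\perp \cap N_{x_0}\mathbb{C}P^p$ (real dimension $2q$), on which $Y(0)=0$ and the Jacobi operator has the same totally real eigenvalue, producing $\sin(t/r)V(t)$ and $\lambda_2=\frac{1}{r}\cot u$ of multiplicity $2q$. Multiplicities total $1+2p+2q=2n-1=\dim M$, as required.

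The main obstacle is the complex-structure bookkeeping: one must verify that $J$ preserves $N_{x_0}\mathbb{C}P^p$, that the three candidate subspaces of $T_qM$ are mutually orthogonal and exhaust the tangent space, and that the curvature tensor of $\mathbb{C}P^n(c+3)$ splits the Jacobi operator with eigenvalue $c+3$ on the holomorphic line $\mathbb{R}\,J\gamma'$ and $(c+3)/4$ on its totally real complement. Once these structural facts are in place, the principal curvatures follow directly from the three explicit Jacobi fields by computing the ratio $Y'(u)/Y(u)$ at $t=u$ and applying elementary trigonometric identities.
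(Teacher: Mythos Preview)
The paper does not prove this theorem; it is quoted from Takagi~\cite{Takagi} as background, so there is no in-paper argument to compare against. Your approach---realizing the Type~$A2$ hypersurface as the tube over a totally geodesic $\mathbb{C}P^{p}\subset\mathbb{C}P^{n}(c+3)$ and computing the shape operator via Jacobi fields along normal geodesics---is the standard one and is correct in outline. The threefold splitting into the holomorphic line $\mathbb{R}\,J\gamma'$, the parallel transport of $T_{x_0}\mathbb{C}P^{p}$, and the remaining totally real normal directions is exactly what produces the three eigenvalues with the stated multiplicities, and your dimension count $1+2p+2q=2n-1$ is right. The paper does record, just after the statement, that for $c=1$ the Hopf lift $\pi^{-1}(\bar M)$ is $\mathbb{S}^{2p+1}(\cos u)\times\mathbb{S}^{2q+1}(\sin u)$, which matches your alternative description via the Clifford product.

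One small point to tidy: with the convention you state (outward normal $\gamma'(u)$ and Weingarten map $Y(u)\mapsto -Y'(u)$), the Jacobi field $\sin(2t/r)V(t)$ yields $-\tfrac{2}{r}\cot(2u)$ rather than $+\tfrac{2}{r}\cot(2u)$, and similarly for the other two families. The stated signs in the theorem correspond to the inward unit normal (or equivalently the convention $Y\mapsto Y'$); you should either switch the normal or adjust the sign convention so that the three formulas come out exactly as claimed. Also be explicit that the ``suitably normalized'' radius means geodesic distance $ru$, so that evaluating $\sin(2t/r)$, $\cos(t/r)$, $\sin(t/r)$ at $t=ru$ gives the clean arguments $2u$ and $u$.
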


We note that if $c=1$ and $\bar{M}$ is of type $A1$ or $A2$ then
$\pi^{-1}(\bar{M})=\mathbb{S}^1(\cos u)\times \mathbb{S}^{2n-1}(\sin
u)\subset\mathbb{S}^{2n+1}$ or
$\pi^{-1}(\bar{M})=\mathbb{S}^{2p+1}(\cos u)\times
\mathbb{S}^{2q+1}(\sin u)$, respectively.

By using Takagi's result we classified in \cite{DFCO2} the
biharmonic Hopf cylinders $M=\pi^{-1}(\bar{M})$ in a Sasakian
space form $N^{2n+1}$ over homogeneous real hypersurfaces in
$\mathbb{C}P^{n}$, $n>1$.

\begin{theorem}[\cite{DFCO2}] Let $M=\pi^{-1}(\bar{M})$ be the Hopf
cylinder over $\bar{M}$.
\begin{enumerate}
\item[1)] If $\bar{M}$ is of Type $A1$, then $M$ is
proper-biharmonic if and only if either
\begin{enumerate}
\item[a)] $c=1$ and $\tan^2 u=1$, or \item[b)]
$c\in\Big[\frac{-3n^{2}+2n+1+8\sqrt{2n-1}}{n^{2}+2n+5},+\infty\Big)\setminus\{1\}$
and
$$
\begin{array}{ll} \tan^2
u=&n+\frac{2c-2}{c+3}\\ \\&\pm\frac{\sqrt{c^{2}(n^{2}+2n+5)
+2c(3n^{2}-2n-1)+9n^{2}-30n+13}}{c+3}.
\end{array}
$$
\end{enumerate}
\item[2)] If $\bar{M}$ is of Type $A2$, then $M$ is
proper-biharmonic if and only if either
\begin{enumerate}
\item[a)] $c=1$, $\tan^2 u=1$ and $p\neq q$, or \item[b)]
$c\in\Big[\frac{-3(p-q)^{2}-4n+4+
8\sqrt{(2p+1)(2q+1)}}{(p-q)^{2}+4n+4},+\infty\Big)\setminus\{1\}$
and
$$
\begin{array}{ll}
\tan^2u=&\frac{n}{2p+1}+\frac{2c-2}{(c+3)(2p+1)}\\
\\&\pm\frac{\sqrt{c^{2}((p-q)^{2}+4n+4)+2c(3(p-q)^{2}+4n-4)
+9(p-q)^{2}-12n+4}}{(c+3)(2p+1)}.
\end{array}
$$

\end{enumerate}
\end{enumerate}
\end{theorem}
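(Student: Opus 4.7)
The plan is to apply Proposition \ref{cor1} to each of Takagi's two families and to reduce proper-biharmonicity to a single quadratic equation in $x=\tan^{2}u$. Since $u$ is constant on each homogeneous hypersurface $\bar M$, all principal curvatures are constant and $\|\bar H\|$ is automatically constant; the requirement $\|\bar H\|\neq 0$ in Proposition \ref{cor1} amounts to excluding the minimal embeddings and will be re-examined at the end. Modulo this, the biharmonic condition reduces to the scalar equation $\|\bar B\|^{2}=\tfrac{c(n+1)+3n-5}{2}$.

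For Type $A1$ one has $\|\bar B\|^{2}=(2n-2)\lambda_{2}^{2}+a^{2}$, and the identity $4\cot^{2}(2u)=\cot^{2}u+\tan^{2}u-2$ rewrites this as $\tfrac{c+3}{4}\bigl[(2n-1)\cot^{2}u+\tan^{2}u-2\bigr]$; multiplying the biharmonic condition through by $x=\tan^{2}u$ yields
\[
(c+3)x^{2}-(2cn+4c+6n-4)x+(c+3)(2n-1)=0,
\]
whose two roots are the two values stated in the theorem. For Type $A2$ the same calculation, now with $\|\bar B\|^{2}=2p\lambda_{1}^{2}+2q\lambda_{2}^{2}+a^{2}$, produces the analogue $(c+3)(2p+1)x^{2}-(2cn+4c+6n-4)x+(c+3)(2q+1)=0$. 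A routine expansion, combined with the identity $(2p+1)(2q+1)=n^{2}-(p-q)^{2}$ coming from $p+q=n-1$, identifies the discriminant of each quadratic with the polynomial in $c$ appearing under the square root in the statement.

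To extract the threshold I would view that discriminant as a quadratic in $c$; its own discriminant, after the same identity, collapses to $64(2n-1)$ in Type $A1$ and to $64(2p+1)(2q+1)$ in Type $A2$, and $c_{+}$ emerges as the larger of its two roots. By Vieta the product of the two roots in $x$ is a positive constant, hence they share the same sign, and an elementary estimate shows that the sum $(2cn+4c+6n-4)/(c+3)$ is positive throughout $[c_{+},\infty)$. The main obstacle is the case $c=1$: a direct substitution shows that the value of $\tan^{2}u$ corresponding to the minimal embedding, namely $2n-1$ for Type $A1$ and $(2q+1)/(2p+1)$ for Type $A2$, satisfies the biharmonic quadratic if and only if $c=1$ (the substitution produces a clean factor $1-c$). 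Consequently, at $c=1$ one of the two positive roots must be discarded, leaving $\tan^{2}u=1$ as the unique proper-biharmonic solution; and in Type $A2$ this remaining solution itself becomes minimal exactly when $p=q$, which is why the restriction $p\neq q$ must be imposed in case 2a).
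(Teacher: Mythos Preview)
Your approach is correct and is exactly the one the paper sets up: the paper itself does not include a proof of this theorem (it is quoted from \cite{DFCO2}), but it deliberately records Proposition~\ref{cor1} and Takagi's principal-curvature data for Types $A1$ and $A2$ precisely so that the reader can carry out the computation you describe. Substituting $\lambda_1,\lambda_2,a$ into $\|\bar B\|^2=\tfrac{c(n+1)+3n-5}{2}$, using $4\cot^2(2u)=\cot^2u+\tan^2u-2$ and clearing denominators indeed yields the quadratics
\[
(c+3)x^{2}-(2cn+4c+6n-4)x+(c+3)(2n-1)=0,\qquad
(c+3)(2p+1)x^{2}-(2cn+4c+6n-4)x+(c+3)(2q+1)=0,
\]
whose discriminants, via $(2p+1)(2q+1)=n^{2}-(p-q)^{2}$, match the radicands in the statement; your observation that the minimal value $x=2n-1$ (resp.\ $x=\tfrac{2q+1}{2p+1}$) solves the quadratic iff $c=1$ is precisely what isolates the special cases 1a) and 2a) and forces $p\neq q$ in the latter.

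One small point to tighten: besides showing that both roots are positive on $[c_{+},\infty)$, you must also rule out the interval $(-3,c_{-}]$, where the discriminant in $x$ is again nonnegative. This is immediate once you note that the zero $c_{0}=\tfrac{2-3n}{n+2}$ of the linear-in-$c$ sum of roots must lie strictly between $c_{-}$ and $c_{+}$ (at $c_{0}$ the roots would be real with zero sum and positive product, which is impossible), so on $(-3,c_{-}]$ the sum of roots is negative and both roots are negative. With that remark your sketch is complete and coincides with the intended argument.
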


\begin{theorem}[\cite{DFCO2}] There are no proper-biharmonic
hypersurfaces $M=\pi^{-1}(\bar{M})$ when $\bar{M}$ is a
hypersurface of Type $B$, $C$, $D$ or $E$ in the complex
projective space $\mathbb{C}P^{n}(c+3)$.
\end{theorem}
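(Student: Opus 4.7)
Since each of Takagi's Types $B$, $C$, $D$, $E$ consists of homogeneous real hypersurfaces, the principal curvatures $\lambda_i$ and their multiplicities $m_i$ on $\bar M$ are constants depending only on the parameter $u$ (and, for some types, on extra integer parameters). In particular $\|\bar H\|$ is constant and $\|\bar B\|^2 = \sum_i m_i\lambda_i(u)^2$ is constant. I would split the argument into the two sub-cases $\|\bar H\|\neq 0$ and $\|\bar H\|=0$.

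If $\|\bar H\|\neq 0$, Proposition \ref{cor1} applies and reduces proper-biharmonicity of $M=\pi^{-1}(\bar M)$ to the single scalar equation
\[
\sum_i m_i\,\lambda_i(u)^2 \;=\; \frac{c(n+1)+3n-5}{2}.
\]
For each of the four types I would substitute Takagi's explicit trigonometric formulas for the $\lambda_i$ (all scaled by $1/r=\tfrac12\sqrt{c+3}$) together with the corresponding multiplicities. Clearing denominators produces a polynomial relation $P_{\text{type}}(c,u)=0$, and the task reduces to showing this has no solution $(c,u)$ with $c>-3$ and $u$ in the range allowed by Takagi. The plan is to set $t=\tan^2 u$ (or $\cot 2u$ for the types involving the $2u$-argument) and, case by case, rewrite $P_{\text{type}}$ as a manifestly signed expression: a sum of squares, or a product whose sign on the admissible range is transparent, so that $P_{\text{type}}(c,u)\neq 0$ is forced.

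If instead $\|\bar H\|=0$, then $\bar M$ is minimal in $\bar N$. Since $\xi$ is tangent to the Hopf cylinder $M$ and the normal bundle of $M$ is horizontal with $\nabla_\xi\xi=0$, the mean curvature vector of $M$ in $N$ is (up to a nonzero constant multiple) the horizontal lift of $\bar H$; thus $\bar H=0$ forces $H=0$, making $M$ minimal and hence not proper-biharmonic. The main obstacle is the bookkeeping in the first sub-case: each of Types $B$--$E$ has its own list of principal curvatures and multiplicities together with a type-specific $u$-interval, so four distinct polynomial inequalities must be verified, and arranging each $P_{\text{type}}(c,u)$ into a form whose sign is unambiguous on $\{c>-3\}\times(\text{admissible }u)$ is where the algebra becomes cumbersome. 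A unified approach, should it work, would be to complete the square in $c$ and exhibit a strictly positive remainder in $t$; otherwise a type-by-type discriminant analysis will be required.
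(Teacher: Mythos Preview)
The present paper does not prove this theorem; it is quoted from \cite{DFCO2} without argument, so there is no proof here to compare your proposal against line by line. Your overall strategy---use homogeneity of Takagi's hypersurfaces to obtain constant $\|\bar H\|$ and $\|\bar B\|$, invoke Proposition~\ref{cor1} in the non-minimal case, and dismiss the minimal case as harmonic---is sound and is almost certainly the route taken in \cite{DFCO2}; your treatment of $\bar H=0$ (via the relation $H=\frac{2n-1}{2n}\,\bar H^{*}$ for the Hopf cylinder, so that $\bar H=0$ forces $H=0$) is correct.

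The genuine gap is that the decisive step is never carried out. The entire content of the theorem is the verification, for each of Types $B$, $C$, $D$, $E$ separately, that
\[
\sum_i m_i\,\lambda_i(u)^2 \;=\; \frac{c(n+1)+3n-5}{2}
\]
admits no solution with $c>-3$ and $u$ in Takagi's admissible interval. You reduce correctly to this equation but then stop, conceding that ``arranging each $P_{\text{type}}(c,u)$ into a form whose sign is unambiguous \ldots\ is where the algebra becomes cumbersome'' and offering only a hoped-for tactic (complete the square in $c$, or else a discriminant analysis). That hoped-for step \emph{is} the proof. Type $B$ already has three distinct principal curvatures, and Types $C$, $D$, $E$ each have five and occur only in specific dimensions $n$, so the four resulting equations are genuinely different and each must be written out and shown to be unsolvable. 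Until those computations are actually performed and the obstructions exhibited, what you have is a correct outline, not a proof.
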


\section*{Acknowledgements}

The authors were partially supported by the Grant CEEX, ET,
5871/2006 and by the Grant CEEX, ET, 5883/2006, Romania.

The first author would like to thank to the organizers, especially
to Professor I. Mladenov, for the Conference Grant.

\end{document}